\begin{document}

\title{On a discrete version of length metrics}
\author{Pedro Z\"{u}hlke}
\subjclass[2010]{Primary: 51K05. Secondary: 54E35, 54G20.}
\keywords{Induced metric, inner metric, intrinsic metric, length, metric space}
\maketitle

\begin{abstract}
	Let $ (X,d) $ be a metric space. We study a metric $ d_0 $ on $ X $
	naturally derived from $ d $. If $ (X,d) $ is complete and locally
	compact, or if it is complete and $ (d_0)_0=d_0 $, then
	$ d_0 $ coincides with the length metric induced by $ d $. 
	Counterexamples are constructed when any of
	the hypotheses is absent. The behavior of the
	iterates of $ d_0 $ (the metrics $ d_0^n $ recursively defined as $
	(d_0^{n-1})_0 $) is also considered. 
\end{abstract}


\setcounter{section}{-1}
\section{Introduction}\label{S:basic}

Most of the definitions and many of the unproved facts stated below can be 
found in both \cite{BriHae} and \cite{BurBurIva},
and the notation used here largely coincides with that of \cite{BurBurIva}. 

Let $ (X,d) $ be a metric space. Let $ P=\se{0=t_0<t_1<\dots<t_n=1} $ denote a
partition of $ [0,1] $ and $ \abs{P}=\max_k(t_k-t_{k-1}) $ its norm. 
The length $ L(\ga) $ of a (continuous) path $
\ga\colon [0,1]\to X $ is defined by
\begin{equation}\label{E:length}
	L(\ga)=\sup_{P}\sum_{k=1}^nd(\ga(t_{k-1}),\ga(t_{k}))=\lim_{\abs{P}\to
	0}\sum_{k=1}^nd(\ga(t_{k-1}),\ga(t_{k})),
\end{equation}
where the sup is taken over all finite partitions of $ [0,1] $ and the
second equality is easily proved. The \tdef{length metric} $ \bar{d} \colon
X\times X\to [0,\infty]$
\tdef{induced by $ d $} is given by 
\begin{equation*}
	\bar{d}(p,q)=\inf\set{L(\ga)}{\ga\colon [0,1]\to X\text{ is a  
		path in $ (X,d) $ joining $ p $ to $ q $}}\quad (p,q\in X).
\end{equation*}
Thus $ \bar{d}(p,q)=\infty $ if and only if no $ d $-rectifiable path 
connecting $
p $ and $ q $ exists. 

For $ n\in \N^+ $, the set $ \se{1,\dots,n} $ will be denoted by 
$ [n] $. Given $ \eps>0 $, an \tdef{$ \eps $-chain} joining $ p $
to $ q $ is a string $ c=(x_0,x_1,\dots,x_n) $ of points in $ X $ satisfying
$x_0=p $, $ x_n = q$ and $
d(x_{k-1},x_{k})\leq \eps $ for all $ k\in [n] $.  Its \tdef{length} $ L(c) $
equals $ \sum_{k=1}^nd(x_{k-1},x_k) $. For $ p,q\in X $, define 
\begin{equation*}
	d_{\eps}(p,q)=\inf\set{L(c)}{c \text{ is an $ \eps $-chain joining $ p $ to
	$ q $}}.
\end{equation*}
It is easily seen that $ d_\eps\colon X\times X\to [0,\infty]$ satisfies all the
axioms for a metric for any $ \eps>0 $. Notice that $ d_\eps(p,q)=d(p,q) $ if
the latter is not greater than $ \eps $. Finally, $ d_0\colon X\times X\to
[0,\infty] $ is defined by 
\begin{equation*}
	d_0(p,q) = \sup_{\eps>0}d_\eps(p,q) = \lim_{\eps\to 0}d_\eps(p,q)
	\quad (p,q\in X), 
\end{equation*}
where the second equality follows from the fact that $ d_{\eps'}\geq d_{\eps} $
if $ \eps'\leq \eps $. The proof that $ d_0 $ is indeed a metric is
left to the reader. It may be regarded as a discrete or discontinuous version
of the length metric $ \bar{d} $. The former is defined as the supremum of 
infima, and the latter as the infimum of suprema.  The purpose of this note
is to study $ d_0 $, especially in its relation to $ d $ and $
\bar{d} $. 

\subsection*{Summary of results} The main result states that $ d_0 $
agrees with $ \bar {d}$ provided that one of the following conditions is
satisfied: (i) $ (X,d) $ is complete and locally compact; (ii) $ (X,d) $ is
complete and $ (d_0)_0 =d_0	$; (iii) $ (X,d) $ is a length space (that is, 
$ \bar{d}=d $). Moreover, if (i) holds then any $p,q\in  X $ such that $
d_0(p,q)<\infty $ can be joined by a path whose $ d_0 $-length equals $ d_0(p,q)
$. This theorem is proved in \S\ref{S:general}. Examples showing that none of
its hypotheses can be omitted are constructed in \S\ref{S:examples}. In
particular, we exhibit a space $ (X,d) $ for which $ d_0 \neq \bar{d} $ even
though $ (X,d) $ is complete, $ \sig $-compact (hence separable), path-connected
and locally path-connected, both through rectifiable paths.

Given metrics $ \rho,\,\rho' $ on $ X $, let us write $ \rho\leq \rho' $ to mean
that $ \rho(p,q)\leq \rho'(p,q) $ for all $ p,q\in X $, and $ \rho<\rho' $ when
moreover strict inequality holds for at least one pair of points.  In
\S\ref{S:iterates} we study the iterates of $ d_0 $, i.e., the metrics $ d_0^n $
inductively defined by $ d_0^{n+1}=(d_0^n)_0 $, where $ d_0^0=d $.  It is proved
that if $ (X,d) $ is complete and $ d_0^n=d_0^{n+1} $ for some $ n\in \N $, then
$ d_0^{m}=\bar{d} $ for all $ m\geq n $.  We construct complete spaces $ (Y_n,d)
$ where 
\begin{equation*}
	d<d_0<\dots<d_0^n<d_0^{n+1}=\bar{d},
\end{equation*}
and a complete space $ (Y_\infty,d) $ for which $ d_0^n<d_0^{n+1}  $ for all $
n\in \N $, but $ \lim_nd_0^n<\bar{d} $. This should be compared to the
behavior of the $ \bar{\phantom{x}} $ operation, which is always idempotent.


\section{General results}\label{S:general}

\begin{lem}\label{L:ineqs}
	Let $ (X,d) $ be any metric space. Then 
\begin{equation}\label{E:ineqs}
	d\leq d_0\leq \bar{d}.
\end{equation}
\end{lem}
\begin{proof}
	It is clear that $ d\leq d_0 $ since  $ d\leq d_\eps $ for all $ \eps>0 $.
	Given a path $ \ga $ joining $ p $ to $ q $, a sum as in \eqref{E:length}
	coincides with the length of a corresponding $ \eps $-chain $
	(\ga(0),\ga(t_1),\dots,\ga(1)) $ as soon as $ \abs{P}\leq \eps
	$. Thus $ d_\eps(p,q)\leq L(\ga) $ for any such path $ \ga $, whence $
	d_\eps\leq \bar{d} $ for all $ \eps>0$ and $ d_0\leq \bar{d} $.
\end{proof}
\begin{lem}\label{L:completeness}
	If $ (X,d) $ is complete, then so are $ (X,d_0) $ and $ (X,\bar{d}) $. 
\end{lem}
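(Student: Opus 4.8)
The plan is to reduce both assertions to the completeness of $ (X,d) $ and then to upgrade $ d $-convergence to convergence in the finer metric. Since $ d\leq d_0\leq \bar d $ by Lemma~\ref{L:ineqs}, a sequence that is Cauchy for $ d_0 $ or for $ \bar d $ is automatically $ d $-Cauchy, hence $ d $-converges to some $ x\in X $ by hypothesis. Thus it remains only to show that $ d $-convergence to $ x $ can be promoted to convergence in $ d_0 $ (resp. $ \bar d $), and I would treat the two metrics by different arguments.

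For $ d_0 $ the natural tool is lower semicontinuity. First I would observe that for each fixed $ p $ and each $ \eps>0 $ the function $ d_\eps(p,\cdot) $ is continuous with respect to $ d $: if $ d(y,y')\leq \eps $ then $ d_\eps(y,y')=d(y,y') $, so the triangle inequality for $ d_\eps $ gives $ \abs{d_\eps(p,y)-d_\eps(p,y')}\leq d(y,y') $. Consequently $ d_0(p,\cdot)=\sup_{\eps>0}d_\eps(p,\cdot) $ is a supremum of $ d $-continuous functions, hence $ d $-lower semicontinuous: whenever $ y_m\to y $ in $ d $ one has $ d_0(p,y)\leq \liminf_m d_0(p,y_m) $. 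Now if $ (x_n) $ is $ d_0 $-Cauchy and $ x_n\to x $ in $ d $, then given $ \delta>0 $ choose $ N $ with $ d_0(x_m,x_n)\leq \delta $ for all $ m,n\geq N $; fixing $ n\geq N $ and letting $ m\to\infty $ yields $ d_0(x_n,x)\leq \liminf_m d_0(x_n,x_m)\leq \delta $, so $ x_n\to x $ in $ d_0 $.

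For $ \bar d $ I would instead concatenate paths. Passing to a subsequence, assume $ \bar d(x_{n_k},x_{n_{k+1}})<2^{-k} $, and choose paths $ \ga_k $ from $ x_{n_k} $ to $ x_{n_{k+1}} $ with $ L(\ga_k)<2^{-k} $. Reparametrising the $ \ga_k $ onto consecutive subintervals of $ [0,1) $ and concatenating produces a path $ \ga $ of finite total length $ \sum_k L(\ga_k) $. Because $ d(\ga(s),\ga(t))\leq L(\ga|_{[s,t]}) $ and the tail lengths $ \sum_{j\geq k}L(\ga_j) $ tend to $ 0 $, the point $ \ga(t) $ is $ d $-Cauchy as $ t\to 1 $ and, as $ \ga $ runs through the points $ x_{n_k}\to x $, it converges in $ d $ to $ x $; hence $ \ga $ extends to a continuous path on $ [0,1] $ ending at $ x $. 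Restricting to the portion from $ x_{n_k} $ onward gives $ \bar d(x_{n_k},x)\leq \sum_{j\geq k}L(\ga_j)<2^{-k+1}\to 0 $, so the subsequence $ \bar d $-converges to $ x $, and a Cauchy sequence with a convergent subsequence converges.

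The routine reductions (Cauchy in the finer metric implies Cauchy in $ d $, and a Cauchy sequence with a convergent subsequence converges) are immediate. The genuine content, and the step I expect to require the most care, is the upgrade from $ d $- to $ \bar d $-convergence: one must check that the infinite concatenation is an honest continuous path whose length stays controlled all the way up to the limit point $ x $, which is exactly where completeness of $ (X,d) $ is used to guarantee that $ x $ exists.
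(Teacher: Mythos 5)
Your proof is correct. The $\bar d$ half is essentially the paper's own argument: the same concatenation of paths of geometrically decreasing length, with continuity at the terminal point coming from $d$-convergence to $x$; the only cosmetic difference is that you conclude via ``Cauchy plus a convergent subsequence implies convergent,'' whereas the paper starts the concatenation at an arbitrary late index $n$ and bounds $\bar d(x_n,x)\leq\eps$ directly. The $d_0$ half is where you genuinely diverge: the paper argues by hand, choosing an auxiliary point $x_m$ with $d(x_m,x)\leq\min\se{\de,\eps}$ so that $d_\de(x_m,x)=d(x_m,x)$, and estimating $d_\de(x_n,x)\leq d_0(x_n,x_m)+d(x_m,x)$ for every $\de>0$; you instead first establish that each $d_\eps(p,\cdot)$ is $d$-continuous (a local Lipschitz estimate from $d_\eps=d$ below scale $\eps$), deduce that $d_0(p,\cdot)$ is $d$-lower semicontinuous as a supremum of continuous functions, and then get $d_0(x_n,x)\leq\liminf_m d_0(x_n,x_m)\leq\de$ in one line. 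This is precisely the content of the paper's \lref{L:semicontinuity}, which is stated and proved only \emph{after} the completeness lemma; your organization shows that if one proves semicontinuity first, the $d_0$ statement becomes an immediate corollary, which is arguably cleaner, while the paper's inline computation avoids reordering the exposition. Both routes rest on the same key fact, namely that $d_\eps$ agrees with $d$ at distances below $\eps$, and your identification of where completeness of $(X,d)$ enters is accurate.
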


\begin{proof}
	Suppose first that $ (x_n)_{n\in \N} $ is a $ d_0 $-Cauchy sequence. Then it
	is also $ d $-Cauchy by \eqref{E:ineqs}, hence it $ d $-converges to
	some $ x\in X $ by hypothesis. Let $ \eps>0 $ be given. Take $ n_0\in \N
	$ such that $ i,j\geq n_0 $ implies $ d_0(x_i,x_j)\leq \eps $. Given
	$ \de > 0 $, choose 
	$ m\geq n_0 $ so that $d(x_m,x)\leq \min\se{\de,\eps} $.
	Then $ d_\de(x_m,x)=d(x_m,x) $, hence
	\begin{equation*}
		d_\de(x_n,x) \leq d_\de(x_n,x_m)+d_\de(x_m,x)\leq
		d_0(x_n,x_m)+d(x_m,x)\leq 2\eps\text{ for all $ n\geq n_0 $}.
	\end{equation*}
	Since $\de$ is arbitrary, it follows that $d_0(x_n,x)\leq 2\eps$ for all
	$n\geq n_0$. Therefore $(x_n)$ $d_0$-converges to $x$.

	Suppose now that $ (x_n)_{n\in \N} $ is a $ \bar{d} $-Cauchy sequence. Then,
	as above, it must have a $ d $-limit $ x $.
	Choose 
	$ n_1\leq n_2\leq \dots $ such that $ \bar{d}(x_i,x_j)<2^{-\nu} $ for all 
	$i,j\geq n_\nu$. Let $ \eps >0 $ be given, take $\nu_0\in \N$
	satisfying  $2^{-\nu_0+1}\leq\eps$ and let $ n\geq n_{\nu_0} $. 
	Choose a path $\ga_0\colon
	[0,\frac{1}{2}]\to X$ joining $x_{n}$ to $x_{n_{\nu_0+1}}$ of length less
	than $ 2^{-\nu_0} $. For each $k\geq 1 $, let $ \ga_k\colon
	[1-2^{-k},1-2^{-k-1}] \to X $ be a path of length less than $ 2^{-\nu_0-k} $
	joining $ x_{n_{\nu_0+k}} $ to $ x_{n_{\nu_0+k+1}} $.
	Finally, define $ \ga\colon [0,1]\to X $ to be the concatenation 
	of all the $ \ga_k $. To be precise, set
	\begin{equation*}
		\ga(t)=\ga_k(t)\text{ if $ t\in	\big[1-2^{-k},1-2^{-k-1}\big] $\,\ 
		($ k\geq 0 $) \,and \,$ \ga(1)=x $.}
	\end{equation*}
	Since $(x_n)$ 
	$ d $-converges to $ x $, $ \ga $ is indeed $ d $-continuous at 
	$ t=1 $. Further, its length is at most $ 2^{-\nu_0+1} \leq \eps $. 
	Therefore $ \bar{d}(x_n,x)\leq \eps $ for all $n\geq n_{\nu_0}$ and $ (x_n)
	$ $ \bar{d} $-converges to $ x $.
\end{proof}

\begin{exm}\label{E:localcompactness}
	Let $ (X,d) $ be the subspace of $ \R^2 $ (with the Euclidean metric)
	which consists of $ [0,1]\times \se{0} $ and 
	the vertical segments of length 1 based at $ (0,0) $ and $
	(\frac{1}{k},0) $ for each $k\in \N^+$. Then $ (X,d) $ is compact and $
	d_0 = \bar{d} $, but $ (X,d_0) $ is not locally compact.
\end{exm}

\begin{rem}\label{R:ineqs2}
	Let $ \rho\leq \rho' $ be two metrics on $ X $. Then $ \bar{\rho}\leq
	\bar{\rho}' $ and $ \rho_0\leq \rho_0' $. Indeed, if $ \ga\colon [0,1]\to X
	$ is $ \rho' $-continuous, then it is $ \rho $-continuous and $
	L_{\rho}(\ga)\leq L_{\rho'}(\ga) $. Similarly, an $ \eps $-chain for $ \rho'
	$ is also an $ \eps $-chain for $ \rho $, and its $ \rho $-length is smaller
	than its $ \rho' $-length. 
\end{rem}

\begin{lem}\label{L:semicontinuity}
	Let $ (X,d) $ be a metric space. Then $ d_0 $ is lower semicontinuous with
	respect to $ d $:
	\begin{equation*}\label{E:semicontinuity}
		\liminf_{p_n\lto{d} p\,,\, q_n\lto{d\,} q}d_0(p_n,q_n)\geq d_0(p,q) \text{
		\ for all $ p,q \in X $.}
	\end{equation*}
\end{lem}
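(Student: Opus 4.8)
The plan is to reduce the statement about $ d_0 $ to a family of more tractable statements about the approximating metrics $ d_\eps $, exploiting two facts established earlier: that $ d_0\geq d_\eps $ pointwise for every $ \eps>0 $ (since $ d_0=\sup_\eps d_\eps $), and that $ d_\eps(x,y)=d(x,y) $ whenever $ d(x,y)\leq\eps $. Fix arbitrary sequences $ p_n\lto{d} p $ and $ q_n\lto{d} q $, and fix $ \eps>0 $ for the moment.

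First I would apply the triangle inequality for the metric $ d_\eps $ twice to obtain $ d_\eps(p,q)\leq d_\eps(p,p_n)+d_\eps(p_n,q_n)+d_\eps(q_n,q) $, which rearranges to $ d_\eps(p_n,q_n)\geq d_\eps(p,q)-d_\eps(p,p_n)-d_\eps(q_n,q) $. Because $ d(p_n,p)\to 0 $ and $ d(q_n,q)\to 0 $, for all sufficiently large $ n $ we have $ d(p_n,p)\leq\eps $ and $ d(q_n,q)\leq\eps $, whence $ d_\eps(p,p_n)=d(p,p_n) $ and $ d_\eps(q_n,q)=d(q_n,q) $. This yields $ d_\eps(p_n,q_n)\geq d_\eps(p,q)-d(p,p_n)-d(q_n,q) $ for all large $ n $.

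Now I would pass to the limit. Taking $ \liminf_n $ of both sides and using $ d(p,p_n),\,d(q,q_n)\to 0 $ gives $ \liminf_n d_\eps(p_n,q_n)\geq d_\eps(p,q) $. Since $ d_0(p_n,q_n)\geq d_\eps(p_n,q_n) $ for every $ n $, it follows that $ \liminf_n d_0(p_n,q_n)\geq d_\eps(p,q) $. The crucial point is that this lower bound holds for every $ \eps>0 $ with the same left-hand side, so I may finally take the supremum over $ \eps>0 $ (equivalently the limit $ \eps\to 0 $) on the right, concluding that $ \liminf_n d_0(p_n,q_n)\geq \sup_{\eps>0}d_\eps(p,q)=d_0(p,q) $. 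The argument remains valid verbatim when $ d_0(p,q)=\infty $, since then the supremum on the right is already infinite.

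I do not anticipate a serious obstacle here; the only subtlety worth flagging is the order of the two limiting operations. It is essential to extract the uniform-in-$ \eps $ lower bound $ \liminf_n d_0(p_n,q_n)\geq d_\eps(p,q) $ \emph{before} letting $ \eps\to 0 $, rather than attempting to interchange the $ \liminf $ over $ n $ with the supremum over $ \eps $ directly. The former step is legitimate precisely because $ d_0 $ dominates each $ d_\eps $, which is what converts a collection of estimates for the well-behaved metrics $ d_\eps $ into the desired semicontinuity of $ d_0 $.
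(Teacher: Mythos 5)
Your proof is correct and follows essentially the same route as the paper: bound $d_0$ from below by $d_\eps$, use the fact that $d_\eps$ agrees with $d$ at distances at most $\eps$ to get $\liminf_n d_\eps(p_n,q_n)\geq d_\eps(p,q)$, and then let $\eps\to 0$. The only difference is that you spell out, via the triangle inequality, the continuity of $d_\eps$ with respect to $d$, which the paper simply asserts.
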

\begin{proof}
	For any $ p,q\in X $ and $ \eps>0 $,
	\begin{equation*}\label{E:semicontinuity1}
		\liminf_{p_n\lto{d} p\,,\, q_n\lto{d\,} q}d_0(p_n,q_n)\geq 
		\liminf_{p_n\lto{d} p\,,\, q_n\lto{d\,} q}d_{\eps}(p_n,q_n)= 
		d_\eps(p,q), 
	\end{equation*}
	where the equality comes from the fact that $ d_\eps $ agrees with $ d $
	at distances smaller than $ \eps $. Letting $ \eps\to 0 $ 
	the desired inequality is obtained. 
\end{proof}
\begin{urem}
	The length metric $ \bar{d} $ is generally not semicontinuous from either
	side with respect to $ d $, as can be shown by means of
	simple examples. Although $ d_\eps $ is continuous with respect to $ d $ for
	all $ \eps>0 $, $ d_0 $ itself need not be continuous, 
	as illustrated by $ X=\se{0}\cup\set{\frac{1}{k}}{k\in \N^+}
	\subs \R$. 
\end{urem}

\begin{thm}\label{T:main}
	Let $ (X,d) $ be a metric space. Suppose that one of the following holds:
	\begin{enumerate}
		\item [(i)] $ (X,d) $ is complete and locally compact.
		\item [(ii)] $ (X,d) $ is complete and $ (d_0)_0 = d_0 $. 
		\item [(iii)] $ (X,d) $ is a length space \tup(that is, 
			$ \bar{d} = d $\tup).
	\end{enumerate}
	Then $ d_0=\bar d $. Moreover, if \tup{(i)} holds then $ (X,d_0) $ is a
	geodesic space.
\end{thm}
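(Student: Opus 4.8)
The plan is to treat the three hypotheses separately, reducing each to the assertion that $(X,d_0)$ is a length space (indeed geodesic under (i)), and then to deduce $d_0=\bar d$ from a single monotonicity observation. That observation is: since $d\le d_0$, Remark~\ref{R:ineqs2} gives $\bar d\le \overline{d_0}$; so if $(X,d_0)$ is a length space, i.e. $\overline{d_0}=d_0$, then $\bar d\le d_0$, and together with $d_0\le\bar d$ from Lemma~\ref{L:ineqs} this forces $d_0=\bar d$. Case (iii) is immediate: if $\bar d=d$ then $d\le d_0\le\bar d=d$ by Lemma~\ref{L:ineqs}, so $d_0=d=\bar d$. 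Thus the work is concentrated in (i) and (ii), and in both the target is the existence of (approximate) midpoints for $d_0$, from which completeness produces a length/geodesic structure by the standard argument.

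For (ii) I would argue directly that $\rho:=d_0$ has approximate midpoints. The hypothesis $(d_0)_0=d_0$ says precisely $\rho_0=\rho$, hence $\rho_\eps\le\sup_{\eps'}\rho_{\eps'}=\rho$ for every $\eps$. Fix $p,q$ with $\rho(p,q)=L<\infty$ and $\delta>0$. For small $\eps$ pick an $\eps$-chain $(x_0=p,\dots,x_n=q)$ in $\rho$ of $\rho$-length at most $\rho_\eps(p,q)+\eps\le L+\eps$, and let $j$ be the first index at which the partial sums $s_k=\sum_{i\le k}\rho(x_{i-1},x_i)$ reach $L/2$; since consecutive jumps are $\le\eps$, the point $z=x_j$ satisfies $\rho(p,z)\le s_j\le L/2+\eps$ and $\rho(z,q)\le s_n-s_j\le L/2+2\eps$ by the triangle inequality applied along the chain. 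This step works for $\rho=d_0$ precisely because the chain lives in $\rho$ itself, so $\rho(p,z)$ --- and not merely some $\rho_\eps(p,z)$ --- is controlled. Taking $\eps\le\delta$ gives approximate midpoints; since $(X,d_0)$ is complete by Lemma~\ref{L:completeness}, these upgrade to exact midpoints and then to a geodesic by the usual dyadic construction, so $\overline{d_0}=d_0$ and the sandwich yields $d_0=\bar d$.

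For (i) the same midpoint argument fails, because cutting an $\eps$-chain in $d$ only bounds $d_\eps(p,z)$, not $d_0(p,z)$, and the latter may be strictly larger. Local compactness is what repairs this. First I would prove a local statement: if $\bar B_d(p,r)$ is compact and $d_0(p,q)=\ell<r$, then $p$ and $q$ have an exact $d_0$-midpoint. For each $n$ take a $\tfrac1n$-chain in $d$ from $p$ to $q$ of $d$-length $\le\ell+\tfrac1n$; its nodes lie in the compact ball, so the midpoint-crossing nodes $z_n$ have a $d$-convergent subsequence $z_n\to z$. The passage from the bounds $d_{\eps_n}(p,z_n)\le\ell/2+\eps_n$ to the desired $d_0(p,z)\le\ell/2$ is the key trick: for each fixed $\eps$, continuity of $d_\eps$ in $d$ (noted before the theorem) gives $d_\eps(p,z)=\lim_n d_\eps(p,z_n)$, while monotonicity gives $d_\eps(p,z_n)\le d_{\eps_n}(p,z_n)\le\ell/2+\eps_n$ once $\eps_n\le\eps$; letting $n\to\infty$ yields $d_\eps(p,z)\le\ell/2$ for every $\eps$, hence $d_0(p,z)\le\ell/2$, and symmetrically for $q$, so $z$ is a midpoint. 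In particular $d_0=\bar d$ holds locally and small pairs are joined by $d_0$-geodesics.

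The remaining and hardest step is to globalize. Since Example~\ref{E:localcompactness} shows that $(X,d_0)$ need not be locally compact, all compactness must be invoked through $d$. I would run a Cohn--Vossen continuation argument: fixing $p$ and $q$ with $d_0(p,q)=\ell<\infty$, consider the maximal arc-length $T$ up to which a unit-speed $d_0$-geodesic $\ga$ from $p$ can be built with $d_0(\ga(s),q)=\ell-s$ throughout. Completeness (Lemma~\ref{L:completeness}) makes $\ga$ extend to $T$ as a Cauchy limit, with the defining relation preserved by continuity and lower semicontinuity (Lemma~\ref{L:semicontinuity}); if $T<\ell$, a compact $d$-ball about $\ga(T)$ together with the local midpoint statement lets me extend $\ga$ strictly beyond $T$ while keeping it minimizing, contradicting maximality. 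Hence $T=\ell$ and $(X,d_0)$ is geodesic, whence $\overline{d_0}=d_0$ and $d_0=\bar d$ once more by the sandwich. I expect the extendability (openness) step to be the main obstacle: producing an intermediate point $q'$ near $\ga(T)$ with $d_0(\ga(T),q')+d_0(q',q)=\ell-T$ and $d_0(\ga(T),q')$ small, lying in the compact ball, requires combining the local midpoint construction with a careful limiting argument, and is where the interplay of completeness, local compactness, and semicontinuity is most delicate.
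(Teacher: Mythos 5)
Your cases (iii) and (ii) are correct and essentially the paper's own argument: (iii) is the sandwich $d\le d_0\le\bar d=d$, and in (ii) you extract approximate midpoints for $d_0$ from $(d_0)_\varepsilon=d_0$, invoke completeness of $(X,d_0)$ (Lemma~\ref{L:completeness}) to get $\overline{d_0}=d_0$, and close the sandwich with $\bar d\le\overline{d_0}$ from Remark~\ref{R:ineqs2}. One slip there: approximate midpoints plus completeness yield a \emph{length} space, not exact midpoints and geodesics --- the metric graph with two vertices joined by edges of length $1+\frac1k$ (mentioned right after the theorem in the paper) is complete with approximate midpoints but has no midpoint between the two vertices, so the ``upgrade to exact midpoints'' you assert is false. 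This is harmless for (ii), since only $\overline{d_0}=d_0$ is needed and the theorem claims a geodesic space only under (i).

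The genuine gap is in case (i), and it sits exactly where you flagged it. Your local midpoint lemma is correct, and its proof (first-crossing nodes of $\frac1n$-chains, compactness, then $d_\varepsilon(p,z)=\lim_n d_\varepsilon(p,z_n)\le\ell/2$ for each fixed $\varepsilon$) is the same mechanism as the paper's Lemmas~\ref{L:smallerandsmaller} and~\ref{L:technical}. But the continuation argument needs, at a point $x=\gamma(T)$ with $m=d_0(x,q)$ possibly \emph{large}, a point $q'$ with $d_0(x,q')=\delta$ and $d_0(x,q')+d_0(q',q)=m$; your midpoint lemma cannot supply this, because it requires all of $d_0(x,q)$ to fit inside a single compact $d$-ball around $x$, and local compactness gives no control on the radius of such balls. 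This splitting statement is precisely the paper's key Lemma~\ref{L:chain}, and the idea needed to prove it is a small but essential extension of your own trick: in a nearly minimal $\frac1n$-chain from $x$ to $q$, the node at which the partial length first exceeds $\delta$ lies in $B_d(x;\delta+\frac1n)$ --- a \emph{small} ball, precompact for small $\delta$ by local compactness alone, no matter how large $m$ is --- so one extracts a $d$-convergent subsequence of these nodes and runs your limiting argument on both halves of the chain to get $d_0(x,q')\le\delta$ and $d_0(q',q)\le m-\delta$, forcing equality by the triangle inequality. Since you left this step as an acknowledged obstacle rather than proving it, the proof of (i) is incomplete (note the same step is already needed to start the geodesic at $p$). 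For the record, the paper globalizes differently: it iterates the splitting (Lemma~\ref{L:chain}), proves by a bootstrap that every $d_0$-ball is $d$-precompact (Lemma~\ref{L:precompact}), and obtains the geodesic as a Cantor-diagonal limit of discrete paths; your continuation scheme, once the splitting step is supplied, would bypass Lemma~\ref{L:precompact} entirely and is a legitimately different, somewhat leaner route to the geodesic conclusion.
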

The latter assertion means that for any $p,q\in X$ for which 
$d_0(p,q)<\infty$, there exists a $ d_0 $-continuous path $\ga\colon [0,1]\to X$
joining $ p $ to $ q $ whose $ d_0 $-length is $ d_0(p,q) $. 
Note that condition (i) is satisfied if $(X,d)$ is
proper (i.e., if any $d$-ball is precompact) and in particular if $ (X,d) $ is
compact.  In cases (ii) and (iii), it cannot
be guaranteed that $ (X,d_0) = (X,\bar{d)} $ is a geodesic space; e.g., let $ X
$ be the metric graph consisting of two vertices and one edge of length $
1+\frac{1}{k} $ connecting them for each $ k\in \N^+ $.

\begin{cor}\label{C:mixed}
		Let $ (X,d) $ be a metric space satisfying any of conditions
		\tup{(i)--(iii)}. Then 
		\begin{equation*}
			d_0=\bar{d}=(d_0)_0=\overline{d_0}=\bar{d}_0.
		\end{equation*}
\end{cor}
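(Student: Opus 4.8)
The plan is to reduce each of the five quantities to $d_0$, using as the single substantive input the equality $d_0 = \bar{d}$, which is precisely the conclusion of Theorem~\ref{T:main} under any of the hypotheses (i)--(iii). Granting this, the three remaining metrics $(d_0)_0$, $\overline{d_0}$ and $\bar{d}_0$ are each shown to coincide with $d_0$ by a short combination of substitution and the elementary inequalities of Lemma~\ref{L:ineqs}.

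Concretely, first I would apply Lemma~\ref{L:ineqs} not to $(X,d)$ but to the metric space $(X,d_0)$; since the lemma is valid for an arbitrary metric, it yields
\[
	d_0 \leq (d_0)_0 \leq \overline{d_0}.
\]
It therefore suffices to prove the single bound $\overline{d_0} \leq d_0$, as this collapses both intermediate terms onto $d_0$ at once. To get it I substitute $d_0 = \bar{d}$ and use idempotency of the length operation, $\overline{d_0} = \overline{\bar{d}} = \bar{d} = d_0$; the sandwich then gives $(d_0)_0 = d_0$ as well. Finally, substituting $\bar{d} = d_0$ once more yields $\bar{d}_0 = (d_0)_0 = d_0$, and collecting the identities produces the asserted chain.

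The one nontrivial ingredient, and the step I expect to demand the most care, is the idempotency $\overline{\bar{d}} = \bar{d}$. The inequality $\bar{d} \leq \overline{\bar{d}}$ is immediate from Lemma~\ref{L:ineqs} applied to $\bar{d}$, so the content lies in the reverse bound: given any $d$-rectifiable path joining $p$ to $q$, reparametrizing it by $d$-arc length makes it $1$-Lipschitz, hence continuous, with respect to $\bar{d}$, and its $\bar{d}$-length equals its $d$-length; taking the infimum over all such paths gives $\overline{\bar{d}}(p,q) \leq \bar{d}(p,q)$, with the convention that the bound is vacuous when $\bar{d}(p,q)=\infty$. This is the standard fact, recorded in the introduction, that the $\bar{\phantom{x}}$ operation is idempotent, and it is what makes the argument uniform across all three hypotheses --- note in particular that for case (i) one cannot instead reapply Theorem~\ref{T:main} to $(X,d_0)$, since $(X,d_0)$ may fail to be locally compact, as Example~\ref{E:localcompactness} shows.
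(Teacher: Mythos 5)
Your proof is correct and follows essentially the same route as the paper: the paper's proof is likewise ``immediate from \eqref{E:ineqs}, Theorem~\ref{T:main} and the idempotency $\bar{\bar{d}}=\bar{d}$,'' which is exactly your combination of the sandwich $d_0\leq (d_0)_0\leq \overline{d_0}$, the substitution $d_0=\bar{d}$, and $\overline{\bar{d}}=\bar{d}$. The only difference is cosmetic: the paper cites the references for idempotency where you sketch its (standard, correct) proof, and your closing observation that Theorem~\ref{T:main} cannot simply be reapplied to $(X,d_0)$ in case (i) is a valid and pertinent remark.
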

\begin{proof}
	Immediate from \eqref{E:ineqs}, \tref{T:main} and the fact that $
	\bar{\bar{d}} $ always coincides with $ \bar{d} $ 
	(a proof of the latter can be found in \cite{BriHae}, pp.~32--33
	or \cite{BurBurIva}, pp.~37--38).
\end{proof}

The remainder of this section is dedicated to the proof of the theorem.

\begin{lem}\label{L:smallerandsmaller}
	Let $ c_n $ be $ \de_n $-chains joining $ x,y\in (X,d) $, with $ \de_n\to 0
	$ as $ n\to\infty $ \tup($ n\in \N $\tup). Then
	\begin{equation*}
		d_0(x,y)\leq \liminf L(c_n).
	\end{equation*}
\end{lem}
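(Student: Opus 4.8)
The plan is to reduce the statement to one about the auxiliary metrics $ d_\eps $, using that they increase to $ d_0 $ as $ \eps \to 0 $ together with the elementary observation that every $ \de $-chain is automatically an $ \eps $-chain whenever $ \de \leq \eps $. Although each $ c_n $ is only a $ \de_n $-chain, once $ n $ is large the parameter $ \de_n $ falls below any prescribed threshold $ \eps $, and then $ c_n $ becomes an admissible competitor in the infimum defining $ d_\eps(x,y) $.

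First I would fix $ \eps > 0 $. Since $ \de_n \to 0 $, there is an index $ N $ with $ \de_n \leq \eps $ for all $ n \geq N $. For such $ n $, each pair of consecutive points of $ c_n $ is at $ d $-distance at most $ \de_n \leq \eps $, so $ c_n $ is an $ \eps $-chain joining $ x $ to $ y $. By the definition of $ d_\eps $ as an infimum over $ \eps $-chains, this gives $ d_\eps(x,y) \leq L(c_n) $ for all $ n \geq N $. A lower bound valid for all sufficiently large $ n $ is a lower bound for the lower limit, whence
\begin{equation*}
	d_\eps(x,y) \leq \liminf_{n\to\infty} L(c_n).
\end{equation*}

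It then remains to let $ \eps \to 0 $. As $ d_0(x,y) = \sup_{\eps > 0} d_\eps(x,y) $ and the right-hand side above is independent of $ \eps $, taking the supremum over $ \eps > 0 $ yields $ d_0(x,y) \leq \liminf_n L(c_n) $, which is the assertion.

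I do not expect a genuine obstacle in this argument, as it rests only on the monotonicity of $ \eps \mapsto d_\eps $ and the definition of $ \liminf $. The single point deserving care is the order of the two limiting processes: one must first pass to $ d_\eps $ for a fixed $ \eps $ and only afterwards let $ \eps \to 0 $, rather than comparing $ d_0 $ with $ L(c_n) $ directly, since the $ c_n $ are not $ \eps $-chains for any single fixed $ \eps $.
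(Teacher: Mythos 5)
Your proof is correct and follows essentially the same route as the paper: both arguments rest on the fact that a $\de_n$-chain competes in the infimum defining the auxiliary metric (the paper uses $d_{\de_n}(x,y)\leq L(c_n)$ together with $d_{\de_n}(x,y)\to d_0(x,y)$, while you fix $\eps$, note that $c_n$ is eventually an $\eps$-chain, and then take the supremum over $\eps$). The two orderings of the limiting processes are interchangeable here, so there is no substantive difference.
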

\begin{proof}
Immediate from the relations
\begin{equation*}
	d_0(x,y)=\lim_nd_{\de_n}(x,y)\text{\ \,and\,\ }d_{\de_n}(x,y)\leq L(c_n).\qedhere
\end{equation*}
\end{proof}

\begin{lem}\label{L:technical}
	Let $ (X,d) $ be a metric space and $p,q\in X$,  $d_0(p,q)<\infty$.  For
	each $ n\in \N $, let $c_n=(x_0,\dots,x_{N_n})$ be an $\eps_n$-chain
	joining $p$ to $q$, with $ L(c_n)\to d_0(p,q) $ and $ \eps_n \to 0 $ as $
	n\to \infty $.  Assume the existence of $ k_n\in [N_n] $ so that
	$(x_{k_n})_{n\in \N}$ $d$-converges to some $x\in X$.  Then $
	d_0(p,q)=d_0(p,x)+d_0(x,q)$, 
	\begin{equation}\label{E:additivity}
		\sum_{k=1}^{k_n}d(x_{k-1},x_{k}) \to d_0(p,x)\text{\ \
		and\ \ }
		\sum_{k=k_n+1}^{N_n}d(x_{k-1},x_{k})\to d_0(x,q)\text{ \,as $ n\to\infty
		$.}
	\end{equation}
\end{lem}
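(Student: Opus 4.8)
The plan is to split each chain $c_n$ at its $k_n$-th vertex and track the two resulting partial sums. Write $A_n=\sum_{k=1}^{k_n}d(x_{k-1},x_k)$ and $B_n=\sum_{k=k_n+1}^{N_n}d(x_{k-1},x_k)$, so that $A_n+B_n=L(c_n)\to d_0(p,q)$, and set $a_n=d(x_{k_n},x)$. The hypothesis that $x_{k_n}\to x$ in $d$ gives $a_n\to 0$. Since $A_n+B_n$ converges to the finite number $d_0(p,q)$ and both summands are nonnegative, $A_n$ and $B_n$ are bounded; hence all of their $\liminf$s and $\limsup$s are finite and the usual arithmetic of these symbols is legitimate. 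This finiteness is what keeps the later bookkeeping honest.

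The first substantive step is to reattach the limit point $x$ to each half-chain. The string $(x_0,\dots,x_{k_n},x)$ is a $\delta_n$-chain from $p$ to $x$ with $\delta_n=\max\se{\eps_n,a_n}\to 0$ and length $A_n+a_n$; symmetrically $(x,x_{k_n},\dots,x_{N_n})$ is a $\delta_n$-chain from $x$ to $q$ of length $a_n+B_n$. It is essential here that the convergence $x_{k_n}\to x$ takes place in $d$ (the weaker of the two metrics), for that is exactly what makes the appended step short and keeps the mesh of the enlarged chain tending to $0$. Applying Lemma~\ref{L:smallerandsmaller} to these two families and using $a_n\to 0$ then yields $d_0(p,x)\le\liminf A_n$ and $d_0(x,q)\le\liminf B_n$.

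Next I would fuse these lower bounds with the triangle inequality for $d_0$. Since $\liminf A_n+\liminf B_n\le\liminf(A_n+B_n)=d_0(p,q)$ and $d_0(p,q)\le d_0(p,x)+d_0(x,q)$, the chain $d_0(p,q)\le d_0(p,x)+d_0(x,q)\le\liminf A_n+\liminf B_n\le d_0(p,q)$ collapses to a string of equalities. This immediately delivers the additivity $d_0(p,q)=d_0(p,x)+d_0(x,q)$; and because the two inequalities $d_0(p,x)\le\liminf A_n$ and $d_0(x,q)\le\liminf B_n$ now sum to an equality, each must itself be an equality, giving $\liminf A_n=d_0(p,x)$ and $\liminf B_n=d_0(x,q)$ separately.

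Finally I would promote these $\liminf$ identities to genuine limits, which I expect to be the only delicate point. From $A_n=(A_n+B_n)-B_n$ and the convergence of $A_n+B_n$ one gets $\limsup A_n\le d_0(p,q)-\liminf B_n=d_0(p,x)$, whence $\limsup A_n\le d_0(p,x)=\liminf A_n$ and $A_n\to d_0(p,x)$; the symmetric computation gives $B_n\to d_0(x,q)$, which is precisely \eqref{E:additivity}. The crux of the whole argument is thus the rigidity enforced by the convergent total: the lower bounds on each half are easy, and it is only the requirement that the two halves add up exactly to $L(c_n)\to d_0(p,q)$ that squeezes each $\liminf$ against the corresponding $\limsup$ and turns the inequalities into limits.
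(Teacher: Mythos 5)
Your proof is correct and follows essentially the same route as the paper's: split the chain at $x_{k_n}$, append the limit point $x$ to form $\delta_n$-chains with $\delta_n=\max\{\eps_n,d(x_{k_n},x)\}\to 0$, apply Lemma~\ref{L:smallerandsmaller} to each half, and use the convergence of the total together with the triangle inequality to force equalities and upgrade the $\liminf$s to limits. The only difference is cosmetic bookkeeping (you drop the vanishing terms $d(x_{k_n},x)$ before taking limits, while the paper keeps them inside $s_n$, $t_n$ and uses $\limsup s_n+\liminf t_n\leq\lim(s_n+t_n)$ directly), so the two arguments are the same in substance.
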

\begin{proof}
	Set $\de_n=\max\se{\eps_n,d(x_{k_n},x)}$. Then for each $n\in \N$,
	$(x_0,\dots,x_{k_n},x)$ and $ (x,x_{k_{n}},\dots,x_{N_n}) $ are
	$\de_n$-chains joining $p$ to $x$ and $x$ to $ q $, respectively.
	Furthermore, if
	\begin{equation*}
		s_n=\sum_{k=1}^{k_n}d(x_{k-1},x_k)+d(x_{k_n},x)\text{\ \ and\ \ }
		t_n=d(x,x_{k_n}) + \sum_{k=k_n+1}^{N_n}d(x_{k-1},x_k),
	\end{equation*}
	then $(s_n+t_n)\to d_0(p,q)$ by hypothesis. By \lref{L:smallerandsmaller},
	$d_0(p,x)\leq \liminf s_n$ and $d_0(x,q)\leq \liminf t_n$. On the other
	hand,
	\begin{equation*}
		\limsup s_n + \liminf t_n\leq \lim(s_n+t_n) = d_0(p,q)\leq
		d_0(p,x)+d_0(x,q),
	\end{equation*}
	whence $ \limsup s_n\leq d_0(p,x) $. Thus $\lim s_n$ exists and equals
	$d_0(p,x)$; similarly, $\lim t_n=d_0(x,q)$. Consequently  
	$d_0(p,q)=d_0(p,x)+d_0(x,q)$.
\end{proof}

In what follows the open ball centered at $ p $ of radius $ r $ with
respect to a metric $ \rho $ is denoted by $ B_\rho(p;r) $. If $ S\subs X $, we
denote by $ B_\rho(S;r) $ the union of all balls $ B_\rho(p;r) $ with $ p\in S
$. Also, $ \lfloor t \rfloor $ denotes the greatest integer smaller than
or equal to $ t \in \R $. The main step in the proof of \tref{T:main} is the
following weak additivity property for $ d_0 $.

\begin{lem}\label{L:chain}
	Let $(X,d)$ be a locally compact metric space. Suppose that 
	$ B_{d_0}(p;r) $ is $d$-precompact and $r\leq  d_0(p,q) <\infty$. 
	Then for all sufficiently small $\de>0$, there exist 
	$p_0=p,\dots,p_N\in X$ \tup($N=\lfloor{r/\de}\rfloor$\tup) such
	that:
	\begin{equation}\label{E:minimizing}
		d_0(p,q)=d_0(p_0,p_1)+\dots+d_0(p_{N-1},p_N)+d_0(p_N,q)\text{\ and\ }
		d_0(p_{k-1},p_k)=\de\text{\ for all $k\in [N]$.}
	\end{equation}
\end{lem}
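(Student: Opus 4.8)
The plan is to manufacture the breakpoints $p_1,\dots,p_N$ by extracting them from a sequence of near-minimizing chains, and then to pin down their $d_0$-distances \emph{exactly} by sandwiching, using \lref{L:technical} for the ``$p$ to $p_i$'' pieces and \lref{L:smallerandsmaller} for the consecutive steps. Fix $\delta$ with $0<\delta\le r$, so that $N=\lfloor r/\delta\rfloor\ge 1$ and $N\delta\le r\le d_0(p,q)$. Since $d_0(p,q)=\lim_\eps d_\eps(p,q)<\infty$, I can choose $\eps_n\to 0$ and $\eps_n$-chains $c_n=(x_0^n,\dots,x_{M_n}^n)$ from $p$ to $q$ with $L(c_n)\to d_0(p,q)$. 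Writing $\ell_k^n=\sum_{j=1}^{k}d(x_{j-1}^n,x_j^n)$ for the partial $d$-length, the numbers $0=\ell_0^n<\ell_1^n<\dots<\ell_{M_n}^n=L(c_n)$ increase in steps of at most $\eps_n$; hence for each $i\in[N]$ there is an index $k_n^i$, nondecreasing in $i$, with $\lvert \ell_{k_n^i}^n-i\delta\rvert\le \eps_n$.

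Next I would extract the limit points. Because $d_0(p,x_{k_n^i}^n)\le \ell_{k_n^i}^n\le i\delta+\eps_n$ and $(N-1)\delta<r$, every breakpoint with $i\le N-1$ lies in the open ball $B_{d_0}(p;r)$ once $n$ is large. For $i=N$ I would instead take $k_n^N$ to be the largest index with $\ell_{k_n^N}^n<N\delta$; then $d_0(p,x_{k_n^N}^n)<N\delta\le r$, and since the following step is at most $\eps_n$ one still has $\ell_{k_n^N}^n\to N\delta$. (The one exceptional case is $N\delta=r=d_0(p,q)$, where the chains may fail to reach length $N\delta$; there I simply set $p_N=q$.) As $B_{d_0}(p;r)$ is $d$-precompact, a diagonal subsequence makes $x_{k_n^i}^n$ $d$-converge to some $p_i\in X$ for every $i\in[N]$; set $p_0=p$.

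Now fix $i$ and apply \lref{L:technical} to the chains $c_n$ along the converging subsequence $x_{k_n^i}^n\to p_i$. This yields $d_0(p,q)=d_0(p,p_i)+d_0(p_i,q)$ together with $\ell_{k_n^i}^n=\sum_{k=1}^{k_n^i}d(x_{k-1}^n,x_k^n)\to d_0(p,p_i)$. Since also $\ell_{k_n^i}^n\to i\delta$, I conclude $d_0(p,p_i)=i\delta$ and $d_0(p_i,q)=d_0(p,q)-i\delta$ for every $i\in[N]$. Granting the step distances below, the additivity in \eqref{E:minimizing} then follows by telescoping: its right-hand side equals $N\delta+d_0(p_N,q)=N\delta+\big(d_0(p,q)-N\delta\big)=d_0(p,q)$.

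It remains to show $d_0(p_{i-1},p_i)=\delta$. The bound $d_0(p_{i-1},p_i)\ge\delta$ is the reverse triangle inequality applied to $d_0(p,p_{i-1})=(i-1)\delta$ and $d_0(p,p_i)=i\delta$. For the reverse bound, I would prepend $p_{i-1}$ and append $p_i$ to the sub-chain $(x_{k_n^{i-1}}^n,\dots,x_{k_n^i}^n)$; since $x_{k_n^{i-1}}^n\to p_{i-1}$ and $x_{k_n^i}^n\to p_i$ in $d$, these become $\delta_n$-chains from $p_{i-1}$ to $p_i$ with $\delta_n\to 0$ and total length $\to \ell_{k_n^i}^n-\ell_{k_n^{i-1}}^n\to \delta$, so \lref{L:smallerandsmaller} gives $d_0(p_{i-1},p_i)\le\delta$. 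The same estimate, applied to the tail of $c_n$, handles the exceptional case $p_N=q$. I expect the main obstacle to be the extraction step: one must force the chosen breakpoints to the exact targets $i\delta$ while keeping them inside the single precompact ball $B_{d_0}(p;r)$, so that $d$-limits exist in $X$; this is precisely why the index $k_n^N$ must be chosen from below and why the boundary case $N\delta=r$ requires separate (though easy) treatment.
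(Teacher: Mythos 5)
Your proof hinges on the claim that $d_0(p,x^n_{k^i_n})\le \ell^n_{k^i_n}$, which is what you use to place every breakpoint inside the $d$-precompact set $B_{d_0}(p;r)$ before extracting limits. That inequality is unjustified, and it is false in general: the initial segment of $c_n$ is an $\varepsilon_n$-chain from $p$ to $x^n_{k^i_n}$ of length $\ell^n_{k^i_n}$, which only gives $d_{\varepsilon_n}(p,x^n_{k^i_n})\le \ell^n_{k^i_n}$; since $d_{\varepsilon_n}\le d_0$, this bounds the wrong quantity. (In the space $Y$ of \eref{E:discrete}, a point halfway along a near-minimizing chain from $p$ to $q$ has partial length about $3/2$ but, by \lref{L:Y}, $d_0$-distance $9/2$ from $p$.) What the triangle inequality actually yields is $x^n_{k^i_n}\in B_d(p;i\delta+\varepsilon_n)$, a $d$-ball whose radius is of order $r$; precompactness of $d$-balls of that size is not a hypothesis --- only the smaller set $B_{d_0}(p;r)\subseteq B_d(p;r)$ is assumed precompact. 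So nothing forces the sequences $(x^n_{k^i_n})_n$ into a compact set, and the diagonal extraction --- the step on which everything else rests --- is unsupported.

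This is not a repairable slip within your framework, because your argument never uses local compactness, and the lemma is false without it: in $(Y,d)$ of \eref{E:discrete} take $r=3=d_0(p,q)$; then $B_{d_0}(p;3)=\se{p}$ is $d$-precompact, yet by \lref{L:Y} no point of $Y$ lies at $d_0$-distance $\delta\in(0,3)$ from $p$, so not even $p_1$ as in \eqref{E:minimizing} can exist. Any correct proof must therefore exploit local compactness, and the paper does so in exactly the two places you skip. First, a covering argument (compactness of $\ol{B_{d_0}(p;r)}$ plus local compactness) produces a uniform $\varepsilon>0$ such that $B_d(x;\varepsilon)$ is $d$-precompact for \emph{every} $x\in B_{d_0}(p;r)$; this is what ``sufficiently small $\delta$'' means, namely $\delta<\varepsilon$. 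Second, the points are constructed one at a time, inductively: $p_i$ is extracted from chain points lying within $d$-distance $\delta$ of $p_{i-1}$, and that small ball $B_d(p_{i-1};\delta)$ \emph{is} precompact because \lref{L:technical} has already shown $d_0(p,p_{i-1})=(i-1)\delta<r$, i.e.\ $p_{i-1}\in B_{d_0}(p;r)$. Your remaining steps (the use of \lref{L:technical}, the sandwich $\delta\le d_0(p_{i-1},p_i)\le\delta$ via \lref{L:smallerandsmaller}, the telescoping, and the edge case $N\delta=r=d_0(p,q)$) are sound, but they all sit downstream of the unsupported extraction; once you localize the extraction inductively as above, your proof essentially becomes the paper's.
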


\begin{proof}
	Using local compactness of $(X,d)$, choose for each $y\in \ol{B_{d_0}(p;r)}$ 
	an $\eps_y>0$ such that $ B_d(y;2\eps_y) $ is $ d $-precompact. Now 
	extract a finite subcover of $ \ol{B_{d_0}(p;r)} $ by finitely many
	$B_d(y_j;\eps_{y_j})$ and set $ \eps=\min\{\eps_{y_j}\} $. It is easily
	checked that $B_{d}(x;\eps)$ is $ d $-precompact for any $x\in B_{d_0}(p;r)$.

	Let $\de\in (0,\eps)$.  For each $n\in \N^+$,
	choose a $\frac{1}{n}$-chain $(x_0,\dots,x_{N_n})$ connecting $p$ to $q$ of
	length less than $ d_{\frac{1}{n}}(p,q)+\frac{1}{n} $,
	and let $k_n$ be the greatest element of $[N_n] $ satisfying 
	\begin{equation*}
		\sum_{k=1}^{k_n}d(x_{k-1},x_k)< \de.
	\end{equation*}
	Then $x_{k_n}\in B_d(p;\de)$ for all $n$ by the triangle inequality. 
	Since this ball is $ d $-precompact, passing to a
	subsequence if necessary, it can be assumed that 
	$(x_{k_n})$ $d$-converges to some $p_1\in X$ as $n\to \infty$. From  
	\lref{L:technical} it follows that $d_0(p,q)=d_0(p,p_1)+d_0(p_1,q)$ and 
	\begin{equation*}
		d_0(p,p_1)=\lim_{n\to\infty}\sum_{k=1}^{k_n}d(x_{k-1},x_{k})
		=\de
	\end{equation*}
	by the choice of $k_n$. To obtain $p_2$,
	apply the same procedure with $p_1$ in place of $p$, and so on inductively.
\end{proof}

\begin{cor}\label{C:balldistance}
	Let $(X,d)$ be a locally compact metric space. Suppose that $ B_{d_0}(p;r) $
	is $d$-precompact and $ d_0(p,q) \geq r $. Then
	\begin{equation*}
	d(q,B_{d_0}(p;r))\leq d_0(q,B_{d_0}(p;r))=d_0(p,q)-r.
	\end{equation*}
\end{cor}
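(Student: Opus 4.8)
The plan is to treat the two assertions separately: the inequality $d(q,S)\leq d_0(q,S)$ is trivial, and the equality $d_0(q,S)=d_0(p,q)-r$ splits into two opposite bounds, the nontrivial one resting on the weak additivity of \lref{L:chain}. Here I abbreviate $S=B_{d_0}(p;r)$ and $d_0(q,S)=\inf_{s\in S}d_0(q,s)$. The first inequality is immediate from $d\leq d_0$ (\lref{L:ineqs}): taking the infimum over $s\in S$ in the pointwise bound $d(q,s)\leq d_0(q,s)$ gives $d(q,S)\leq d_0(q,S)$. The case $d_0(p,q)=\infty$ is also disposed of at once, for then every $s\in S$ satisfies $d_0(p,s)<r<\infty$, forcing $d_0(q,s)=\infty$ (otherwise the triangle inequality would make $d_0(p,q)$ finite), so that $d_0(q,S)=\infty=d_0(p,q)-r$. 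I therefore assume $r\leq d_0(p,q)<\infty$.

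For the lower bound in the equality, fix any $s\in S$, so that $d_0(p,s)<r$. The triangle inequality gives $d_0(q,s)\geq d_0(p,q)-d_0(p,s)>d_0(p,q)-r$, and taking the infimum over $s\in S$ yields $d_0(q,S)\geq d_0(p,q)-r$.

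It remains to prove the reverse inequality $d_0(q,S)\leq d_0(p,q)-r$, and this is where local compactness and the precompactness of $S$ enter, via \lref{L:chain}. For all sufficiently small $\de>0$ that lemma provides points $p_0=p,\dots,p_N$ with $N=\lfloor r/\de\rfloor$ satisfying \eqref{E:minimizing}. Reading that identity directly gives $d_0(p_N,q)=d_0(p,q)-N\de$, while the triangle inequality together with $d_0(p_{k-1},p_k)=\de$ gives $d_0(p,p_N)\leq\sum_{k=1}^Nd_0(p_{k-1},p_k)=N\de$. Choosing $\de$ so that $r/\de$ is not an integer---which is possible along some sequence $\de\to0$---ensures $N\de<r$, hence $d_0(p,p_N)<r$ and $p_N\in S$. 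Therefore $d_0(q,S)\leq d_0(q,p_N)=d_0(p,q)-N\de$, and letting $\de\to0$ along this sequence, so that $N\de=\lfloor r/\de\rfloor\de\to r$, delivers $d_0(q,S)\leq d_0(p,q)-r$. Combined with the lower bound, this proves the equality.

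The main obstacle is the upper bound: one must invoke \lref{L:chain}, read off both the value $d_0(p_N,q)$ and the membership $p_N\in S$ from the single decomposition \eqref{E:minimizing}, and then guarantee that $p_N$ lies in the \emph{open} ball $S$ by avoiding the boundary case $N\de=r$ before passing to the limit $\de\to0$. The remaining steps use only the triangle inequality and $d\leq d_0$.
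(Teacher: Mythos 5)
Your proof is correct and is essentially the paper's: the paper's entire proof is the one-line remark that the corollary is immediate from \lref{L:chain} and the inequality $d\leq d_0$, which is exactly the argument you spell out (the chain from \eqref{E:minimizing} gives the point $p_N\in B_{d_0}(p;r)$ with $d_0(p_N,q)=d_0(p,q)-N\delta$, the triangle inequality gives the lower bound, and $d\leq d_0$ gives the first inequality). The extra care you take---handling $d_0(p,q)=\infty$ and avoiding the boundary case where $r/\delta$ is an integer so that $p_N$ lands in the \emph{open} ball---is exactly the detail the paper leaves implicit.
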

\begin{proof}
	Immediate from \lref{L:chain} and the inequality $d\leq d_0$.
\end{proof}

\begin{lem}\label{L:precompact}
	If $(X,d)$ is locally compact and complete, then any $ d_0 $-ball
	is $ d $-precompact. 
\end{lem}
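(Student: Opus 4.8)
The plan is to fix a point $p\in X$ and prove that
\[
	I=\set{r\ge 0}{B_{d_0}(p;r)\text{ is $d$-precompact}}
\]
is all of $[0,\infty)$; since every $d_0$-ball has the form $B_{d_0}(p;r)$, this is exactly what is needed. Note first that $I$ is an interval: it contains $0$ and is downward closed, because $B_{d_0}(p;r')\subs B_{d_0}(p;r)$ whenever $r'\le r$, and a subset of a $d$-precompact set is $d$-precompact. Moreover $I$ contains a positive number: local compactness provides $\eta>0$ with $B_d(p;\eta)$ $d$-precompact, and $d\le d_0$ gives $B_{d_0}(p;\eta)\subs B_d(p;\eta)$, so $\eta\in I$. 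Writing $R=\sup I$, it remains to show $R=\infty$, which I would establish by contradiction, assuming $R<\infty$.

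The first and main step is to promote the boundary radius into $I$, i.e.\ to show $B_{d_0}(p;R)$ is $d$-precompact. As $(X,d)$ is complete, it suffices to prove this ball is totally bounded. Given $\eps>0$, pick $r<R$ with $R-r<\tfrac{\eps}{2}$ (such $r$ lies in $I$ since $I\supseteq[0,R)$); then $K=\ol{B_{d_0}(p;r)}$ is $d$-compact, hence covered by finitely many balls $B_d(z_1;\tfrac{\eps}{2}),\dots,B_d(z_m;\tfrac{\eps}{2})$. For $q\in B_{d_0}(p;R)$ with $d_0(p,q)\ge r$, Corollary~\ref{C:balldistance} yields $d\big(q,B_{d_0}(p;r)\big)\le d_0(p,q)-r<R-r<\tfrac{\eps}{2}$, so $q$ lies within $d$-distance $\eps$ of some $z_i$; and if $d_0(p,q)<r$ then already $q\in K$. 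Thus the balls $B_d(z_i;\eps)$ cover $B_{d_0}(p;R)$, which is therefore totally bounded, and $R\in I$.

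The second step overshoots $R$ and contradicts the definition of the supremum. Now that $K_R=\ol{B_{d_0}(p;R)}$ is genuinely $d$-compact, local compactness furnishes a single $\de>0$ for which $B_d(K_R;\de)$ is $d$-precompact: cover $K_R$ by finitely many $B_d(y_j;\eps_{y_j})$ with each $B_d(y_j;2\eps_{y_j})$ $d$-precompact and set $\de=\min_j\eps_{y_j}$, exactly as in the proof of \lref{L:chain}. For $q\in B_{d_0}(p;R+\tfrac{\de}{2})$, either $d_0(p,q)<R$ and $q\in K_R$, or $d_0(p,q)\ge R$ and Corollary~\ref{C:balldistance} gives $d\big(q,B_{d_0}(p;R)\big)\le d_0(p,q)-R<\tfrac{\de}{2}$; in either case $q\in B_d(K_R;\de)$. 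Hence $B_{d_0}(p;R+\tfrac{\de}{2})\subs B_d(K_R;\de)$ is $d$-precompact, so $R+\tfrac{\de}{2}\in I$, contradicting $R=\sup I$. Therefore $R=\infty$ and $I=[0,\infty)$.

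I expect the first step to be the main obstacle. One cannot simply iterate the neighborhood construction of the second step along a sequence $r\to R$, because the precompact-neighborhood width $\de(r)$ extracted from the compact set $\ol{B_{d_0}(p;r)}$ may degenerate to $0$ as $r\to R$, so no uniform overshoot is available directly. It is precisely to obtain a genuinely compact $K_R$---and hence one fixed $\de>0$---that one must first pass to the boundary via total boundedness, and this is the only place where completeness of $(X,d)$ enters.
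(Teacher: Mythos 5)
Your proof is correct and follows essentially the same route as the paper's: both define $R$ as the supremum of radii of $d$-precompact $d_0$-balls, use Corollary~\ref{C:balldistance} once to show $B_{d_0}(p;R)$ is totally bounded (hence precompact, by completeness), and then use local compactness together with Corollary~\ref{C:balldistance} a second time to overshoot $R$ and reach a contradiction. The only differences are cosmetic (the interval $I$ formalism, covering the closure $K_R$ instead of the ball itself, and the margin $\de/2$ instead of $\de$).
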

\begin{proof}
	Let $ p\in X $ be arbitrary. By local compactness of $(X,d)$, there exists
	some $\de>0$ such that $B_d(p;\de)$ is $ d $-precompact. Since
	$B_{d_0}(p;\de)\subs B_{d}(p;\de)$, we have that 
	\begin{equation*}
		R:=\sup\set{r\in \R}{B_{d_0}(p;r)\text{ is $ d $-precompact}}\geq \de.
	\end{equation*}
	Suppose for the sake of obtaining a contradiction that
	$ R $ is finite. 
	Let $\eps\in (0,R)$ be arbitrary. Using the fact that
	$B_{d_0}(p;R-\eps)$ is $ d $-totally bounded, cover the latter by 
	finitely many balls $B_d(p_j;\eps)$ ($ j\in [m]) $, 
	where $p_j\in B_{d_0}(p;R-\eps)$ for
	each $j$. Then, by \cref{C:balldistance},
	\begin{equation*}
		B_{d_0}(p;R)\subs \bcup_{j=1}^m B_d(p_j;2\eps).
	\end{equation*}
	Therefore $B_{d_0}(p;R)$ is $ d $-totally bounded. Its $ d $-closure is also
	totally bounded and in addition complete, as a closed subset of the complete
	space $(X,d)$. Thus $B_{d_0}(p;R)$ is $ d $-precompact. In particular,  it 
	can be covered by finitely many balls $ B_d(x_i;\de_{i}) $ ($ i\in [l] $)
	such that each $ B_d(x_i;2\de_i) $ is $ d $-precompact. Again by
	\cref{C:balldistance}, if $ \de=\min_i\se{\de_i} $ then	
	\begin{equation*}
		B_{d_0}(p;R+\de) \subs \bcup_{i=1}^lB_d(x_i;2\de_i).
	\end{equation*}
	Hence the former is $ d $-precompact, contradicting the choice of $ R $. 
\end{proof}

\begin{urem}
	It need not be true that the $ d_0 $-balls are $ d_0 $-precompact even if $
	(X,d) $ is compact, as shown by \eref{E:localcompactness}.
\end{urem}

\begin{proof}[Proof of \tref{T:main}]
	Suppose that hypothesis (i) of the theorem holds and let $p,q\in X$ be 
	arbitrary. If $ d_0(p,q) $ is not finite, then by
	\eqref{E:ineqs} neither is $ \bar d(p,q)$, hence in this
	case they coincide. Assume then that $ r=d_0(p,q) $ is finite.
	The ball $B_{d_0}(p;r)$ is precompact by \lref{L:precompact}. Applying 
	\lref{L:chain} one deduces that
	for all sufficiently large $ n\in \N $, say $n\geq n_0$, it is possible 
	to find $ p=p^n_0,\dots,p^n_{N_n+1}=q $ for which 
	\eqref{E:minimizing} holds with $\de=\frac{1}{n}$, where $ N_n=\lfloor rn
	\rfloor $.  Let 
	$ S\subs [0,1]$ be a countable dense subset. 
	For each $n\geq n_0$,
	define $ \ga_n\colon S\to X $ by $ \ga_n(s)=p^n_{\lfloor N_ns\rfloor} $ 
	$(s\in S)$. Then $ \ga_n $ is discontinuous, but 
	\begin{equation}\label{E:almost}
	 	d(\ga_n(s),\ga_n(s'))\leq d_0(\ga_n(s),\ga_n(s'))\leq 
		\frac{1}{n}\big\vert\lfloor N_ns\rfloor - 
			\lfloor N_ns'\rfloor \big\vert \leq r\abs{s-s'}+\frac{2}{n}\quad
			(s,\,s'\in S).
	\end{equation}
	The triangle inequality for $d_0$ shows that $p^n_k$ lies in the 
	compact set  $ \ol{B_{d_0}(p;r)}$ for all $k\in [N_n]$.  Using
	Cantor's diagonal argument one can obtain a subsequence $(\ga_{n_\nu})$ with
	the property that for all $s\in S$, $\ga_{n_\nu}(s)$ $ d $-converges to some
	$\ga(s)\in X$ as $\nu\to \infty$. Then, by \eqref{E:almost}, 
	\begin{equation*}\label{E:Lipschitz}
		d(\ga(s),\ga(s'))\leq r\abs{s-s'}\text{ for all $s,s'\in S$.}
	\end{equation*}
	Since $(X,d)$ is complete, $ \ga $ can be continuously extended to $ [0,1]
	$, so that 
	\begin{equation*}
		d(\ga(t),\ga(t'))\leq r\abs{t-t'}\text{ for all $t,t'\in [0,1]$.}
	\end{equation*}
	This implies that $ L(\ga) \leq d_0(p,q)=r $. 
	But $ d_0\leq \bar{d} $ and $ \bar{d}(p,q)\leq L(\ga)
	$ by the definition of $ \bar{d} $, hence
	\begin{equation*}
		d_0(p,q)=\bar{d}(p,q)=L(\ga).
	\end{equation*}
	Thus $(X,\bar{d})=(X,d_0)$ is a geodesic metric space. 

	Now assume that (ii) is satisfied. 
	Since $ d_0\leq (d_0)_\eps\leq (d_0)_0 $ always holds, we deduce that 
	$ (d_0)_\eps=d_0 $ for all $ \eps>0 $. Therefore, given $
	p,q\in X$ with $ d_0(p,q)<\infty $ and $ \eps>0 $, 
	one can find an $ \eps $-chain $ c $ (with
	respect to $ d_0 $) joining $ p $ to $ q $ for which $ L_{d_0}(c)\leq
	d_0(p,q)+\eps $. Hence $ (X,d_0) $ admits ``approximate midpoints''
	(cf.~\cite{BriHae}, p.~32 or \cite{BurBurIva}, p.~42). Since $ (X,d_0) $ is
	complete by \lref{L:completeness}, it must be a 
	a length space, i.e., $ \overline{d_0} = d_0 $. 
	On the other hand, \eqref{E:ineqs} and \lref{R:ineqs2} imply that 
	\begin{equation*}
		d\leq d_0\leq \bar{d}\leq \overline{d_0}.
	\end{equation*}
	Therefore $ d_0=\bar{d} $.

	Finally, if condition (iii) holds then it is obvious from
	\eqref{E:ineqs} that $ d_0 = \bar{d} $.
\end{proof}

\section{Examples}\label{S:examples}

It is erroneously asserted in Exercise 3.1.26 of \cite{BurBurIva} that $ d_0 $
coincides with $ \bar{d} $ whenever $ (X,d) $ is complete.\footnote{The
definition of $ d_\eps $ (and hence that of $ d_0 $) in \cite{BurBurIva} is
different from the one we have given, but the two are easily proved to be
equivalent.}  
However, as the following examples show, none of the hypotheses in (i) and (ii)
of \tref{T:main} can be omitted. 
\begin{exm}[cf.~Figure \ref{F:Y}]\label{E:discrete}
	For each integer $ k \geq 2 $, let
	\begin{alignat*}{9}\label{E:S_k}
		S_k=\,&\set{(1,0,\dots,0,t,0,\dots)\in \ell^\infty}{t\in [0,1]}\,\cup \\
		\notag
		\cup\,&\set{(t,0,\dots,0,1,0,\dots)\in \ell^\infty}{t\in [\tfrac{1}{k},1]}
		\,\cup \\ \notag
		\cup\,&\set{(\tfrac{1}{k},0,\dots,0,t,0,\dots)\in \ell^\infty}{t\in [0,1]},
	\end{alignat*}
	where in each description the only nonzero coordinates are the first and
	$k$-th. Let 
	\begin{equation*}
		p=(0,0,\dots), \ \ q=(1,0,0,\dots) 
	\end{equation*}
	and let $ Y=\se{p}\cup\bcup_{k=2}^{\infty}S_k $, equipped with the
	metric $ d $ derived from $ \norm{\cdot}_\infty $. 
	Then $ (Y,d) $ is
	complete, $ \sig $-compact (hence separable)
	and connected (though not path-connected).
	It is not locally compact, since no neighborhood of $ p $ or $ q $ is
	precompact.
	It will be proved shortly that $ d_0(p,y) = 3 + \bar{d}(q,y) $ for all $
	y\neq p $. In particular, 
	\begin{equation*}
		d(p,q)=1<d_0(p,q)=3<\bar{d}(p,q)=\infty,
	\end{equation*}
	and $ q $ is simultaneously the point $ d_0 $-closest to $ p $ and 
	one of the points maximizing the $ d $-distance to $ p $. Note that $
	(Y,d_0) $ is disconnected even though $ (Y,d) $ is connected, and that the $
	d_0$-topology is strictly finer than the $ d $-topology, even
	though $ d_0 $ only takes on finite values. 
\begin{figure}[ht]
	\begin{center}
		\includegraphics[scale=.79]{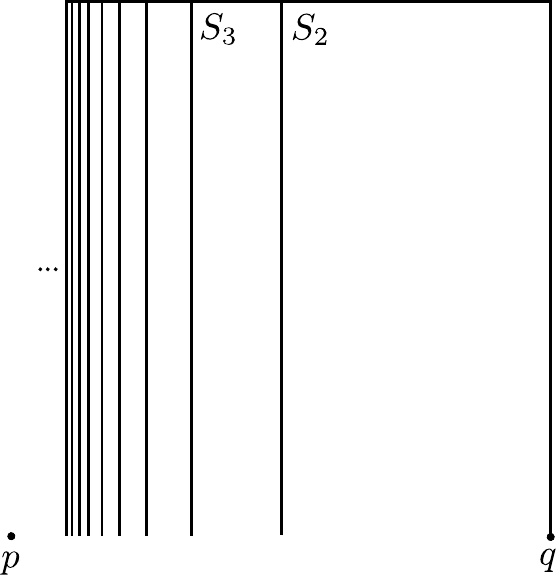}
		\caption{A two-dimensional representation of the space $ Y $ described in  
			\eref{E:incompact}. The segments are drawn to scale,
			but the figure is highly distorted since 
		the $ S_k $ lie in mutually distinct planes of $ \ell^\infty $ and $
		S_k\cap S_l=\se{q} $ for $ k\neq l $.} 
		\label{F:Y}
	\end{center}
\end{figure}
\end{exm}

In the sequel we say that an $ \eps $-chain $ c $ can be \tdef{reduced}
to another $ \eps $-chain $ c' $ if they join the same pair of points and
$ L(c')\leq L(c) $.

\begin{lem}\label{L:Y}
	Let $ (Y,d) $ be as described in \eref{E:discrete}. Then $ 
	d_0(p,y)=3+\bar{d}(q,y) $ for any $ y\neq p$, and the restrictions 
	of $ d_0 $ and $ \bar{d}  $ to $ Y\ssm\se{p} $ coincide. 
	Consequently, $d<d_0<(d_0)_0=\bar{d} $.
\end{lem}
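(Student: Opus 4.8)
The plan is to base everything on two auxiliary functions on $Y$: for $z\in Y$ let $a(z)$ be its first coordinate and $h(z)=\sup_{k\ge 2}z_k$ its \emph{height}. Each $z\in Y\setminus\{p\}$ lies on a unique arc $S_k$ (all sharing only $q$), and I record its arc-length coordinate $\sigma(z)=\bar d(q,z)$, which runs from $0$ at $q$ up part~1, across part~2 and down part~3 to the value $3-\frac1k$ at $(\frac1k,0)$. The whole argument rests on a handful of elementary facts about an $\eps$-chain $(x_0,\dots,x_n)$, valid once $\eps$ is small (write $\Delta a_i,\Delta h_i$ for the increments): a single step changes $a$ and $h$ by at most $\eps$; a step between two distinct arcs costs at least $\max(h_{i-1},h_i)$, so a \emph{cheap} inter-arc step forces both heights to be $\le\eps$; any point with $a\in(\frac12,1)$ lies on a part~2 and hence has $h=1$; and at height $\le\eps$ the only available first coordinates are $1$ (near $q$) and the values $\frac1l$, whose consecutive gaps $\frac{1}{l(l+1)}$ exceed $\eps$ once $l$ drops below $\approx\eps^{-1/2}$.

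First I would prove $d_0(p,q)=3$. The upper bound comes from the explicit chain that hops from $p$ to $(\frac1k,0)$ for $k>1/\eps$ and then runs monotonically up part~3, across part~2 and down part~1 to $q$; its length is exactly $\frac1k+1+(1-\frac1k)+1=3$, so $d_\eps(p,q)\le 3$ and $d_0(p,q)\le 3$. The lower bound is the technical heart. Given an $\eps$-chain $c$ from $p$ to $q$, I estimate $L(c)=\sum_i d(x_{i-1},x_i)\ge\sum_i\big(|\Delta h_i|\,\mathbf 1[\text{step low}]+|\Delta a_i|\,\mathbf 1[\text{step high}]\big)$, calling a step \emph{high} if $\max(h_{i-1},h_i)\ge 1-\eps$; this is legitimate since each summand is at most $\max(|\Delta a_i|,|\Delta h_i|)\le d(x_{i-1},x_i)$. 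The low-step height contribution is at least $2-O(\eps)$ because the chain starts and ends at height $0$ yet must reach height $1$ (by the $a\in(\frac12,1)\Rightarrow h=1$ fact), so its profile ascends once and descends once through every level below $1-\eps$. The high-step contribution in $a$ is at least $1-O(\sqrt\eps)$ because low steps can only move $a$ among values $\le\frac{1}{k_0}=O(\sqrt\eps)$ (nearer teeth being more than $\eps$ apart), so the whole band $(\frac{1}{k_0},1)$ must be swept by high steps. Adding these disjoint contributions gives $L(c)\ge 3-O(\sqrt\eps)$, whence $d_0(p,q)=\lim_{\eps\to0}d_\eps(p,q)=3$.

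Next I would show that every $\eps$-chain from $p$ to a point $y\in S_k$ passes within $\eps$ of $q$ once $\eps<\frac{1}{k(k+1)}$. Since $p\notin S_k$, the chain has a first point $x_j\in S_k$; its predecessor lies on another arc (it cannot be $p$, as that would force $\frac1k\le\eps$), so the step is cheap only if $h_{j-1},h_j\le\eps$, placing $x_j$ within $\eps$ of $q$ (on part~1) or within $\eps$ of $(\frac1k,0)$ (on part~3). The latter would require the predecessor near $(\frac1l,0)$ with $l$ adjacent to $k$, forcing an inter-tooth step of size $\frac{1}{k(k+1)}>\eps$, which is impossible; hence $x_j$ is within $\eps$ of $q$. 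Feeding a minimizing sequence of such chains into \lref{L:technical} with $x=q$ yields $d_0(p,y)=d_0(p,q)+d_0(q,y)$. An analogous entry/exit analysis shows that on $Y\setminus\{p\}$ a chain either stays inside a single arc or crosses near $q$; combined with \tref{T:main}(i) applied to the compact arcs $S_k$ (on which $d_0$ equals arc length $=\bar d$) and the triangle inequality, this gives $d_0=\bar d$ throughout $Y\setminus\{p\}$. In particular $d_0(q,y)=\bar d(q,y)$, so $d_0(p,y)=d_0(p,q)+d_0(q,y)=3+\bar d(q,y)$, the reverse inequality being the triangle inequality.

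Finally I would assemble the chain $d<d_0<(d_0)_0=\bar d$ using \lref{L:ineqs}. Strictness of $d<d_0$ and $d_0<\bar d$ is witnessed at $(p,q)$, where $1<3<\infty$. For the identity $(d_0)_0=\bar d$ the key observation is that the formula just proved makes $p$ a $d_0$-isolated point at distance exactly $3$: every $z\ne p$ has $d_0(p,z)=3+\bar d(q,z)\ge 3$. Hence for $\eps<3$ no $d_0$-$\eps$-chain can leave $p$, so $(d_0)_0(p,z)=\infty=\bar d(p,z)$; while on $Y\setminus\{p\}$ the metric $d_0=\bar d$ is already intrinsic, so \tref{T:main}(iii) gives $(d_0)_0=d_0=\bar d$ there (chains with $\eps<3$ never touch $p$). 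I expect the genuinely delicate point to be the lower bound $d_0(p,q)\ge 3$: it is precisely here that one must rule out the cheaper routes that ascend only partway or that attempt horizontal progress across the forbidden band $a\in(\tfrac12,1)$ at low height, and the bookkeeping separating the vertical cost ($\approx 2$) from the horizontal cost ($\approx 1$) is the crux of the whole lemma.
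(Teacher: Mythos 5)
Your proof is correct, and although it rests on the same geometric skeleton as the paper's --- for small $\eps$ a chain can pass between arcs, or leave $p$, only through the $\eps$-ball about $q$; inside an arc $d_0$ is arc length; the upper bound comes from the explicit chain through a tooth $(\tfrac1m,0,\dots)$ with $\tfrac1m\le\eps$ --- your implementation of the two key lower bounds is genuinely different. The paper's proof is self-contained and never invokes the general results of \S\ref{S:general}: it reduces any $\eps$-chain joining $x\in S_m$ to $y\in S_l$ to one lying in $S_m\cup S_l$ and asserts as ``direct verification'' that such a chain has length at least $\bar{d}(x,y)-4\eps$; for $d_0(p,y)$ it splits a chain into pieces $c_1$ (from $y$ to $q$) and $c_2$ (from $q$ to $p$), adjoins the nearest tooth bottom to $c_2$, and applies the same verification to the resulting chain from $q$ to that tooth. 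You instead (a) prove $d_0(p,q)\ge 3$ from first principles by the height/width bookkeeping, (b) split $d_0(p,y)$ at $q$ using \lref{L:technical}, and (c) identify $d_0$ with $\bar{d}$ on $Y\setminus\{p\}$ via \tref{T:main}(i) on the compact arcs, handling $(d_0)_0$ via \tref{T:main}(iii). Your bookkeeping --- low steps pay the height variation $2-O(\eps)$, high steps pay for sweeping the first coordinate across $(1/k_0,1)$ with $k_0\sim\eps^{-1/2}$, the $O(\sqrt{\eps})$ loss reflecting tooth-hopping at small scales --- is exactly the estimate the paper leaves to the reader, so your write-up is most explicit precisely where the example is delicate; conversely, leaning on the general lemmas shortens the rest. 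One step you should state explicitly: \tref{T:main}(i) computes $(d|_{S_k})_0$, the chain metric of $S_k$ as a space in its own right, and equating this with the ambient $d_0$ restricted to $S_k$ is \emph{not} achieved by splitting at $q$ (for a pair $(x,q)$ that splitting is vacuous, since a chain from $x$ to $q$ may leave $S_k$ and return). Here your entry/exit analysis is needed once more: every exit from and re-entry into $S_k$ occurs within $\eps$ of $q$, so replacing the whole stretch between the first exit and the last re-entry by the single point $q$ converts any chain into a chain inside $S_k$ at a cost of at most $2\eps$. With that sentence added, your argument is complete, at a level of rigor at least matching the paper's own.
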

\begin{proof}
	Suppose that $ x\in S_m $, $y\in S_l $. Take $
	\eps<\min\{\frac{1}{m(m+1)},\frac{1}{l(l+1)}\} $. Using that 
	\begin{equation*}
		B_d(S_m\cup S_l;\eps)=S_m\cup S_l\cup B_d(q;\eps), 	
	\end{equation*}
 	it is not hard to see that any 
	$ \eps $-chain joining $ x $ to $ y $ can be reduced to a chain 
	contained in $ S_m\cup S_l $. Any such chain has length at least $
	\bar{d}(x,y)-4\eps $, as one verifies directly. Thus $
	d_0(x,y)\geq \bar{d}(x,y) $, and the reverse inequality is guaranteed by 
	\eqref{E:ineqs}. 

	Now let $ y\in S_l $ and $ \eps<\frac{1}{l(l+1)} $.  
	Since $ B_d(S_l;\eps) =S_l\cup B_d(q;\eps) $, any $ \eps $-chain 
	joining $ y $ to $ p $ can be reduced to the concatenation of chains 
	$ c_1 $ and $ c_2 $ joining $ y $ to $ q $ and $ q $ to $ p $, respectively. 
	Let $ x = (\frac{1}{m},0,\dots)	$, where $ m $ is the largest integer
	satisfying $ \frac{1}{m} \leq \eps $. Adjoining $ x $ to $ c_2 $, 
	an $ \eps $-chain $ c $ joining $ q $ to $ x $ satisfying $
	L(c)\leq L(c_2)+\eps $ is obtained. But direct verification shows that 
	$ L(c) \geq \bar{d}(q,x)-2\eps	$. Thus
	\begin{equation*}
		L(c_2)\geq 3-3\eps, \text{ \,while\, } L(c_1) \geq \bar{d}(q,y)
		- 4\eps
	\end{equation*} 	
	as in the preceding paragraph. 
	Hence $ d_0(p,y)\geq 3+\bar{d}(q,y) $. 
	On the other hand, given $ \eps > 0 $, an $ \eps $-chain $ (x_0,\dots,x_n) $
	joining $ p $ to $ y $ of length at most $ 3+\bar{d}(q,y) $ can easily be
	constructed: take $ x_1=(\frac{1}{m},0,\dots) $ with $m>\eps^{-1} $ and
	choose the remaining $ x_k $ monotonically along the geodesic joining 
	$ x_1 $ to $ y $.  Thus $ d_0(p,y)=3+\bar{d}(q,y) $. This completes the
	proof of the asserted description of $ d_0 $. 
	
	Since $ d_0(p,y)\geq 3 $ for all $ y\neq p $, $
	(d_0)_0(p,y)=\infty=\bar{d}(p,y) $ for any such $ y $.  In addition, $
	(d_0)_0(x,y)=\bar{d}(x,y) $ for $ x,y\neq p $ since $ d_0(x,y)=\bar{d}(x,y)
	$ and $ d_0\leq (d_0)_0\leq \bar{d} $ always holds.  Therefore $
	d<d_0<(d_0)_0=\bar{d} $ as claimed.
\end{proof}

It is natural to think that the failure of $ d_0 $ to agree with $
\bar{d} $ in the previous example has more to do with the fact that $ (Y,d) $ is
not path-connected than with absence of local compactness; after all, $
\bar{d} $ is defined in terms of lengths of paths.  However, this is not the
case. 

\begin{exm}\label{E:incompact}
	For each integer $ k\geq 2 $, let $
	f_k\colon [\frac{1}{k+1},\frac{1}{k}] \to \R $ be a smooth function
	such that	
	\begin{equation*}
		f_k\big(\tfrac{1}{k+1}\big)=\tfrac{1}{k+1},\ \ 
		f_k\big(\tfrac{1}{k}\big)=0\ \  \text{and} \ \ \abs{f_k(t)}\leq t
		\text{\ \ for all $ t\in \big[\tfrac{1}{k+1},\tfrac{1}{k}\big] $}.
	\end{equation*}
	Let $ \eta_k\colon \big[\frac{1}{k+1},\frac{1}{k}\big] \to \ell^\infty$ be
	given by $ \eta_k(t)=(t,0,\dots,0, f_k(t),0,\dots) $, where $ f_k $
	appears in the $ k $-th coordinate. Finally, let $ \ga_k\colon
	\big[0,\frac{1}{k}\big]\to \ell^\infty $ be the concatenation of the line
	segment joining $ p=(0,0,\dots) $ to $ \eta_k\big(\frac{1}{k+1}\big) $ and $
	\eta_k $. Then $ \ga_k $ joins $ p $ to $ \big(\frac{1}{k},0,\dots) $ 
	without passing through $ \big( \frac{1}{m},0,\dots \big) $ for any $
	m\neq k $, and (because $ \abs{f_k(t)}\leq t $) the intersection of its
	image with any $ d $-ball centered at $ p $ is an arc of $ \ga_k $. 
	Let $ f_k $ be chosen so 
	as to have $ L(\ga_k) = 3+\frac{1}{k} $.
	
	Now define $ X $ to be the union of the set $ Y $ of
	\eref{E:discrete} and the images of all paths $ \ga_k $ $ (k\geq 2) $,
	equipped with the restriction $ d $ of the metric of $ \ell^\infty $. Then $
	(X,d) $ is not locally compact, but it is complete, $ \sig $-compact,
	and any pair of points in it can be joined by a rectifiable path. Moreover,
	given $ x\in X $ and a $ d $-neighborhood $ U $ of $ x $, there exists a $
	d $-ball $ B\subs U$ containing $ x $ such that any two points of $ B $ can
	be joined by a rectifiable path contained in $ B $.  Still, 
	\begin{equation*}
		d(p,q)=1<d_0(p,q)=3 <\bar{d}(p,q)=6.
	\end{equation*}
	A minimizing geodesic connecting $ p $ and $ q $ is the 
	concatenation of $ \ga_k $ and $ S_k $, for any $ k$. 
\end{exm}
	
\begin{exm}\label{E:incomplete}
	Let $ X = \R^2\ssm ( \se{0}\times [-1,+1] ) $, furnished with the
	restriction $ d $ of the Euclidean metric on $ \R^2 $. Then $ d_0 = d $, but
	if $ p=(-1,0) $, $ q = (1,0) $, then $ d_0(p,q)=2 < 2\sqrt{2}=\bar{d}(p,q)$.
	The space $ (X,d) $ is locally compact and (locally) path-connected 
	through rectifiable paths, but
	not complete.
\end{exm}

\begin{exm}\label{E:rational}
	Let $ X $ be the set of all rational numbers, equipped with the restriction
	of the Euclidean metric $ d $. Then $ (d_0)_0=d_0=d $ does not coincide with
	$ \bar{d} $.
\end{exm}

\section{Iterates of $ d_0 $}\label{S:iterates}

One of the fundamental properties of the induced length metric $ \bar{d} $ is
that $ \bar{\bar{d}} = \bar{d} $ . In contrast, $ d_0 $ may not coincide
with $ (d_0)_0 $, as shown by the space $ (Y,d) $ of \eref{E:discrete}.
For a metric $ d $ and $ n\in \N^+ $, let 
\begin{equation*}
	d_0^{0} = d \text{\ \,and\,\ }d^{n}_0 = (d_0^{n-1})_0. 
\end{equation*}  It follows from \eqref{E:ineqs} and \cref{C:mixed} (applied to
$ (X,\bar{d})) $ that 
\begin{equation*}\label{E:ineqs2}
	d\leq d_0^{n-1} \leq d_0^{n}\leq \bar{d} \text{\ \ for any $ n \in \N^+ $}.
\end{equation*}

\begin{prop}\label{P:stabilizer}
	Let $ (X,d) $ be a complete metric space and suppose that 
	$ d_0^n=d_0^{n+1} $ for some $ n\in \N $. Then $ d_0^m=\bar{d} $ for all $
	m\geq n $.
\end{prop}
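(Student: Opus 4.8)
The plan is to apply \tref{T:main}(ii) not to $(X,d)$ itself but to the space $(X,\rho)$ with $\rho=d_0^n$, exploiting the fact that the stabilization hypothesis is exactly condition (ii) read for this new base metric. First I would record two preliminary facts. Iterating \lref{L:completeness}, starting from the completeness of $(X,d)$, shows that $(X,d_0)$ is complete, hence $(X,(d_0)_0)=(X,d_0^2)$ is complete, and inductively that $(X,d_0^n)$ is complete (the completeness argument only uses the inequalities $\rho\leq\rho_0\leq\bar\rho$, which hold for any base metric). Next, the hypothesis $d_0^n=d_0^{n+1}$ says precisely that $\rho_0=\rho$; applying the $(\cdot)_0$ operation to both sides, equivalently observing $d_0^{n+2}=(d_0^{n+1})_0=(d_0^n)_0=d_0^{n+1}$, gives $(\rho_0)_0=\rho_0$.

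With $\rho=d_0^n$ now playing the role of the base metric, $(X,\rho)$ is complete and satisfies $(\rho_0)_0=\rho_0$, so \tref{T:main}(ii) yields $\rho_0=\bar\rho$. Since $\rho_0=\rho$, this reads $d_0^n=\overline{d_0^n}$; that is, $(X,d_0^n)$ is already a length space.

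It remains to upgrade this to $d_0^n=\bar d$. One inequality is free: the iterates satisfy $d_0^n\leq\bar d$ by the chain of inequalities preceding the statement. For the reverse I would invoke the monotonicity of the length operation from \lref{R:ineqs2}: since $d\leq d_0^n$, we get $\bar d\leq\overline{d_0^n}=d_0^n$. Combining the two inequalities gives $d_0^n=\bar d$.

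Finally, for arbitrary $m\geq n$ the iterates are nondecreasing and bounded above by $\bar d$, so $d_0^n\leq d_0^m\leq\bar d$; since the two ends coincide, $d_0^m=\bar d$. The argument is short, and the only genuine point of care, which I would flag explicitly, is to keep the two distinct operations apart: the conclusion of \tref{T:main}(ii) is about the \emph{length} metric $\overline{d_0^n}$, whereas the stabilization hypothesis concerns the \emph{iterate} $(d_0^n)_0=d_0^{n+1}$, and it is precisely \lref{R:ineqs2} that bridges $\overline{d_0^n}$ back to $\bar d$.
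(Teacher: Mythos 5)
Your proof is correct and follows essentially the same route as the paper's: iterated use of \lref{L:completeness}, an application of \tref{T:main}(ii) to an iterate of $d$, and the monotonicity from \lref{R:ineqs2} to bridge $\bar{d}\leq \overline{d_0^{\,k}}$ back to $\bar{d}$. The only difference is the index at which the theorem is applied: you take $\rho=d_0^n$ (deriving $(\rho_0)_0=\rho_0$ by applying $(\cdot)_0$ to the stabilization equation), while the paper takes $\rho=d_0^{n-1}$ so that hypothesis (ii) is literally the given equation; your choice has the minor advantage of avoiding the paper's separate reduction of the case $n=0$ to $n\geq 1$.
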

\begin{proof}
	If $ d_0 = d $ then certainly $ (d_0)_0=d_0 $, hence
	it may be assumed than $ n\geq 1 $. As a consequence of \lref{R:ineqs2} and the
	preceding inequalities, 
	\begin{equation*}
		\bar{d}\leq \overline{d_0^{n-1}}.  \end{equation*}
	The metric $ d_0^{n-1} $ is complete by repeated use of 
	\lref{L:completeness}. By 
	case (ii) of \tref{T:main} applied to $ d_0^{n-1} $,
\begin{equation*}
	d_0^{n} = (d_0^{n-1})_0 = \overline{d_0^{n-1}}. 
\end{equation*}  
	Therefore $ \bar{d}=d_0^n=d_0^{n+1}=\dots $.
\end{proof}
\begin{urem}
	The assumption that $ (X,d) $ is complete cannot be omitted, as shown by
	\eref{E:rational}. 
\end{urem}
We shall now describe complete connected spaces $ (Y_{n},d) $ for which 
\begin{equation*}
	d<d_0<\dots<d_0^{n}<d_0^{n+1}=\bar{d},
\end{equation*}
and a complete connected space $ (Y_{\infty},d) $ for which 
$ \lim_{n\to \infty}d_0^{n}<\bar{d}
$. 

\begin{exm}\label{E:iterated}
	Set $ (Y_{1}, d) =(Y,d) $, where the latter was described in
	\eref{E:discrete}. 
	
	For each $ m\in \N^+ $, take the disjoint union of $
	m $ copies $ Z^j_m $ ($ j\in [m] $) of $ Y_{1} $ with its metric $ d $ 
	contracted by the factor $ \frac{1}{m} $. Let $ Y_2 $ be the result of gluing:
	\begin{enumerate}
		\item [(a)] The point $ q $ of $ Z_m^j $ to the point $ p $ of 
			$ Z_m^{j+1} $ for each $ j\in [m-1] $ and $ m\in \N^+ $.
		\item [(b)] The points $ p $ of $ Z^1_m $ for all $ m\in \N^+ $ to a 
			single point, still denoted $ p $.
		\item [(c)] The points $ q $ of $ Z^m_m $ for all $ m\in \N^+ $ to a 
			single point, still denoted $ q $.
	\end{enumerate}
	
	The spaces $ Y_{n} $ for integer $ n\geq 2 $ are defined inductively by
	replacing $ Y_1 $ and $ Y_2 $ by $ Y_{n-1} $ and $ Y_n $ in the preceding
	paragraph, respectively.  Note that $ Y_n $ contains an isometric copy of $
	Y_{n-1} $, namely, $ Z^1_1 $ (or more precisely its image under the gluing),
	and the points $ p $ and $ q $ of $ Y_{n-1} $ are thereby identified with
	the corresponding points $ p $ and $ q $ of $ Y_{n} $. 
	Finally, let $
	Y_{\infty} = \bcup_{n=1}^\infty Y_n $, where $ Y_{n-1} $ is regarded as a
	subspace of $ Y_{n} $ as was just indicated. The metric in $ Y_\infty $ is
	uniquely determined since any two points in it lie in the same $ Y_n $
	whenever $ n $ is sufficiently large. 
	
	Straightforward inductive arguments show that $ Y_n $ is connected and
	complete for every $ n $; $ Y_\infty $ is connected as the union
	of an increasing family of connected spaces, and it is complete because a
	Cauchy sequence either eventually lies in some $ Y_n $, or converges to $ p
	$ or $ q $.
\end{exm}

\begin{prop}\label{P:ascending}
	Let $ (Y_n,d) $ be as in \eref{E:iterated}. Then
	$ d^n_0(p,y)=3+\bar{d}(q,y) $ for all $ y\neq p$ and the
	restrictions of $ d_0^n $ and $ \bar{d} $ to $ Y_n\ssm\se{p}$  
	agree. In particular, 
	\begin{equation}\label{E:last}
		d<d_0<\dots<d_0^{n-1}<d_0^n<d_0^{n+1}=\bar{d} \text{ in $ Y_n $}.
	\end{equation}
	In $ (Y_\infty,d) $ we have that  $ d_0^n<d_0^{n+1} $ for all 
	$ n\in \N $ and $ \lim_{n\to\infty} d_0^n < \bar{d} $.
\end{prop}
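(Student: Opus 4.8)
The plan is to prove the assertions about the finite spaces $Y_n$ by induction on $n$ and then read off those about $Y_\infty$. Write $G$ for the operation of \eref{E:iterated}, so $Y_n=G(Y_{n-1})$; thus $Y_n$ is the union over $m\in\N^+$ of a chain of scaled copies $Z_m^1,\dots,Z_m^m$ of $Y_{n-1}$ (each contracted by $\tfrac1m$) glued end to end, all chains sharing only the endpoints $p,q$. Inside $Z_m^\ell$ denote by $p_\ell,q_\ell$ the images of $p,q$, so that $q_\ell=p_{\ell+1}$, $p_1=p$ and $q_m=q$. The statement I would carry through the induction is the description together with the endpoint profile: in $Y_n$ one has $d_0^n(p,y)=3+\bar d(q,y)$ for $y\neq p$, $\,d_0^n=\bar d$ on $Y_n\ssm\{p\}$, and $d_0^k(p,q)=1$ for $1\le k\le n-1$. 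The base case $n=1$ is exactly \lref{L:Y}, its profile clause being vacuous.

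For the inductive step I would establish a gluing recursion: if $(W,p,q)$ is complete with $p$ path-isolated (so $\bar d(p,\cdot)=\infty$) and satisfies these three conditions with $j$ in place of $n$, then $G(W)$ satisfies them with $j+1$. The upper bounds come from explicit chains. In the metric $d_0^{k-1}$ each gluing point $p_{\ell+1}=q_\ell$ lies at distance $\tfrac1m\,d_0^{k-1}(p,q)$ from $p_\ell$ inside $Z_m^\ell$, equal to $\tfrac1m$ for $k\le j$ and to $\tfrac3m$ for $k=j+1$; hence $p=p_1,\dots,p_{m+1}=q$ is a $d_0^{k-1}$-$\eps$-chain of length $1$ (resp.\ $3$) once $m$ is large, so $d_0^k(p,q)\le1$ for $k\le j$ and $d_0^{j+1}(p,q)\le3$, yielding the new profile. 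For a general $y\neq p$ one goes from $p$ to $q$ through a chain with $m\ge3/\eps$ (cost $3$) and then runs a fine $d_0^{j}$-chain from $q$ to $y$ within its copy (cost $\bar d(q,y)$, as $d_0^{j}=\bar d$ off $p$ in $W$), giving $d_0^{j+1}(p,y)\le3+\bar d(q,y)$; this is $\infty$ precisely when reaching $y$ would force crossing a copy from its $q$ to its $p$.

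The delicate half, which I expect to be the main obstacle, is the matching lower bounds. For $(p,q)$ I would use a potential: since $d_0^{j}(p,w)\ge3$ for $w\neq p$, the function $g=\min(d_0^{j}(p,\cdot),3)$ is $1$-Lipschitz for $d_0^{j}$ on $W$ with $g(p)=0$, $g(q)=3$, and $h=\tfrac{3(\ell-1)}m+\tfrac1m g$ on $Z_m^\ell$ patches to a well-defined function $h\colon G(W)\to[0,3]$ that is $1$-Lipschitz for $d_0^{j}$ — consistency at gluing points and across chains holds because the chains meet only in $p,q$ — with $h(p)=0$, $h(q)=3$. Any $d_0^{j}$-$\eps$-chain from $p$ to $q$ then has length at least $h(q)-h(p)=3$, so $d_0^{j+1}(p,q)\ge3$. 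Since $d_0^{j+1}\le\bar d$ by \eqref{E:ineqs}, the identities $d_0^{j+1}=\bar d$ off $p$ and $d_0^{j+1}(p,y)=3+\bar d(q,y)$ remain to be shown from below; here I would combine localization — distinct copies meet only in cut points, so a chain between two points of one copy folds back into it without lengthening, whence $d_0^{j+1}$ restricted to $Z_m^\ell$ is the scaled intrinsic $\overline{d_0^{j}}=\bar d$ of $W$ — with an additivity argument in the spirit of \lref{L:technical} splitting $d_0^{j+1}(p,y)$ into the escape cost $3$ and $\bar d(q,y)$. Checking that the vanishing gluing hops add nothing to the length metric, and that excursions between parallel chains never shorten a chain, is where the real work lies.

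Granting the description, \eqref{E:last} is immediate: the endpoint profile is $1,\dots,1,3,\infty$ with $n$ ones, so $d_0^{n-1}(p,q)=1<3=d_0^{n}(p,q)<\infty=d_0^{n+1}(p,q)=\bar d(p,q)$, while every earlier strict inequality $d_0^k<d_0^{k+1}$ ($k\le n-2$) is witnessed at the endpoints of a scaled copy of $Y_{k+1}$ inside $Y_n$, where localization makes the ambient iterates agree with the intrinsic ones. For $Y_\infty=\bcup_nY_n$ I would first note that iterates can only drop under enlargement: if $A\subs X$ then $d_0^k$ formed in $X$ is $\le$ the one formed in $A$ (induction on $k$, since an $\eps$-chain in $A$ is one in $X$). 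Thus $d_0^k(p,q)\le1$ in $Y_\infty$ because $d_0^k(p,q)=1$ already in $Y_n$ for all $n>k$, while $d_0^k\ge d$; hence $d_0^k(p,q)=1$ for every finite $k$ and $\lim_nd_0^n(p,q)=1$. As $p$ stays path-isolated in $Y_\infty$ (its local picture is that of \eref{E:discrete}), $\bar d(p,q)=\infty$, so $\lim_nd_0^n<\bar d$. Finally, for each $n$ the space $Y_\infty$ contains a genuinely scaled copy ($m\ge2$) of $Y_{n+1}$ inside $Y_{n+2}$; by localization the ambient iterates there equal the scaled intrinsic ones, so at its endpoints $d_0^n=\tfrac1m<\tfrac3m=d_0^{n+1}$, establishing $d_0^n<d_0^{n+1}$ for every $n\in\N$.
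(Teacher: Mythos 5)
Your plan follows the paper's skeleton in broad outline --- induction on $n$ with \lref{L:Y} as the base case, explicit chains through the gluing points for the upper bounds, and a folding (``localization'') argument to control chains that stray between copies --- but it diverges in two genuine ways. First, for the central lower bound $d_0^{j+1}(p,q)\geq 3$ you use a $1$-Lipschitz potential $h$, whereas the paper reduces an arbitrary $\eps$-chain to one lying in a single chain of copies $\bcup_{j=1}^m Z^j_m$ and then applies the estimate $d_0^{n-1}(a,b)\geq \frac{3}{m}\abs{i-j}$ for $a\in Z_m^i$, $b\in Z_m^j$. Second, and more consequentially, the paper never carries your ``endpoint profile'' $d_0^k(p,q)=1$ ($k\leq n-1$) through the induction, and it never witnesses the strict inequalities at nested scaled copies: having shown $d_0^n(p,q)=3<\infty=\bar{d}(p,q)$, it obtains all of \eqref{E:last}, and likewise $d_0^n<d_0^{n+1}$ in $Y_\infty$, for free from \pref{P:stabilizer} together with completeness (if $d_0^k=d_0^{k+1}$ for some $k\leq n$, then $d_0^n$ would have to equal $\bar{d}$). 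That shortcut is exactly what spares the paper the parts you yourself call ``the real work'': your direct route requires localization at every level of nesting inside $Y_n$, and again inside $Y_\infty$, which is a substantial extra burden with no compensating gain. (Your restriction-monotonicity lemma for $Y_\infty$ is fine; the paper uses the same observation implicitly when it says $Y_n$ embeds isometrically in $Y_\infty$.)

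Beyond this comparison there is a genuine gap in the potential argument as stated. To conclude that every $d_0^j$-$\eps$-chain from $p$ to $q$ has length at least $h(q)-h(p)=3$, you need $h$ to be $1$-Lipschitz with respect to the \emph{ambient} iterate $d_0^j$ of $G(W)$, and this does not follow from ``consistency at gluing points and across chains''. Consistency makes $h$ well defined and $1$-Lipschitz for the \emph{patched} metric obtained by gluing the intrinsic $d_0^j$'s of the copies; but the ambient $d_0^j$ is a priori \emph{smaller} than that patched metric (its defining chains may hop between copies at every stage of the iteration), and Lipschitzness with respect to a larger metric does not imply it for a smaller one. The issue is not academic: $h$ fails to be $1$-Lipschitz for $d$ itself, since $\abs{h(p)-h(q)}=3>1=d(p,q)$, so the property cannot be inherited cheaply from below; it must be proved for $d_0^j$ directly, and what that requires is precisely a cross-copy lower bound on the ambient $d_0^j$ (additivity over the cut points --- the analogue of the paper's $\frac{3}{m}\abs{i-j}$ inequality). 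Note that the within-copy localization you invoke (the restriction of $d_0^{j+1}$ to $Z_m^\ell$ being the scaled $\bar{d}$) does not supply this cross-copy information. So the potential function does not circumvent the paper's chain-reduction work; it repackages it, and as written the packaging conceals the one step that carries all the content.
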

\begin{proof}
	By induction on $ n $. For $ n = 1 $ the assertions were proved in
	\lref{L:Y}. Assume that the conclusion holds for $ Y_{n-1} $ with $ n\geq
	2$. From now on $ Z^j_m $ denotes the image of this space under the gluing
	which defines $ Y_n $. Recall that $ Z^j_m$ is isometric to $
	(Y_{n-1},\frac{1}{m}d_{Y_{n-1}}) $ for all $ j\in[m] $, and note that the
	restriction of $ \bar{d} $ to $ Z^j_m $ and the length metric on the latter
	induced by the restriction of $ d $ coincide.
	
	Let $ j<m $, $ p\neq x\in Z^j_m $ and $ y\in Y_n $
	be arbitrary. 
	If $ y'\nin Z^j_m $, then $ d_0^{n-1}(x,y') \geq \frac{3}{m}$ (here the
	hypotheses that $ j<m $ and $ x\neq p $ are essential). Hence
	\begin{equation*}
		d_0^{n}(x,y)=\bar{d}(x,y)=\infty \text{\ \,for all $ y\nin Z^j_m $}.
	\end{equation*}
 	If $ y\in Z_m^j $, then $ d_0^{n}(x,y)=\bar{d}(x,y) $ by the induction 
	hypothesis applied to $ Z^j_m $. In particular, taking $ y = p $, 
	\begin{equation}\label{E:special}
		d_0^{n}(p,x) = \bar{d}(p,x) = \infty = 3 + \bar{d}(q,p) \text{ for $
			p\neq x\in Z^j_m $ 
		with $ j<m $.}
	\end{equation}
	
	Now let $ x,y\in Z^m_m \ssm Z_m^{m-1}	$, where we set $ Z_1^0=\se{p} $ for
	convenience, and let $ \eps<\frac{3}{m} $.  Let $ c=(x=x_0,\dots,x_N=y) $ be
	an $ \eps $-chain with respect to $ d_0^{n-1} $ and assume that $ c $ is not
	contained in $ Z^m_m $.  Let $ k_0,k_1 \in [N]$ be the greatest
	(resp.~smallest) indices such that $ x_0,\dots,x_{k_0} \in Z^m_m$ and $
	x_{k_1},\dots,x_N\in Z^m_m $.  Then $
	d_0^{n-1}(x_{k_0},q),\,d_0^{n-1}(x_{k_1},q)\leq \eps $ by the choice of $
	\eps $ and $ k_i $ (informally, the chain cannot leave $ Z^m_m $ through $
	Z^{m-1}_m $ since $ \eps<\frac{3}{m} $).  Replacing $
	(x_{k_0},\dots,x_{k_1}) $ by $ (x_{k_0},q,x_{k_1}) $, we obtain an $ \eps
	$-chain $ c' $ contained in $ Z^m_m $ satisfying $ L(c')\leq L(c)+2\eps $.
	This shows that the restriction of $ d_0^n $ to $ Z^m_m $ agrees with $
	(d|_{Z^m_m\times Z_m^m})_0^n $, that is, with the original metric $ d_0^n $
	on $ Z^m_m $. Hence, by the induction hypothesis applied to $ Z^m_m $,
	\begin{equation*}
		d_0^n(x,y)=\bar{d}(x,y)\text{ for $ x,y\in Z^m_m\ssm Z^{m-1}_m$},~m\in
		\N^+.
	\end{equation*}
	Now let $ x\in Z^m_m\ssm Z_m^{m-1} $ and $ y\in Z^l_l \ssm Z_l^{l-1} $ 
	with $ m\neq l $, and take $ \eps<\min\se{\frac{3}{m},\frac{3}{l}} $. A
	straightforward modification of the preceding argument can be used to
	compare an $ \eps $-chain joining $ x $ to $ y $ to the concatenation of $
	\eps $-chains joining $ x $ to $ q $ and $ q $ to $ y $, allowing one to
	conclude that 
	\begin{equation*}
		d_0^n(x,y)=\bar{d}(x,q)+\bar{d}(q,y)=\bar{d}(x,y).
	\end{equation*}

	It remains to compute $ d_0^{n}(p,y) $ for $ y\in Z^l_l\ssm Z^{l-1}_l $.
	Using the argument of the preceding paragraph again, one
	deduces that 
	\begin{equation*}
		d_0^n(p,y)=d_0^n(p,q)+d_0^n(q,y)=d_0^n(p,q)+\bar{d}(q,y).
	\end{equation*}
	Any $ \eps $-chain joining $ p $ to $ q $ can be reduced to a chain 
	$ c' $ contained in $ \bcup_{j=1}^mZ^j_m $ for some $ m > \eps^{-1}
	$. By the definition of gluing
	and the inductive hypothesis, $ d_0^{n-1}(a,b)\geq \frac{3}{m}\abs{i-j} $ 
	whenever $ a\in Z_m^i $, $ b\in Z_m^j $ ($ a\neq b $). Therefore $ L(c')\geq
	3-\frac{1}{m}\geq 3-\eps$, so that $ d_0^n(p,q)\geq 3 $.
	On the other hand, it is easy to construct explicit $ \eps $-chains of
	length 3 joining $ p $ to $ q $ for any $ \eps>0 $: take $ m>\eps^{-1} $ and
	consider $ (p,q_1,\dots,q_{m-1},q)  $, where $
	q_j\in Z^j_m $ denotes the point glued to $ Z^{j+1}_m $ in step (a) of
	\eref{E:iterated}. Thus $ d_0^n(p,y)=3+\bar{d}(q,y) $ if $ y\in Z_m^m\ssm
	Z_m^{m-1}
	$ for some $ m $, 
	and for other $ y\neq p $ this equality holds by \eqref{E:special}.
	
	This completes the description of the metric $ d_0^n $ on $ Y_n $. 
	Since $ d_0^n(p,y)\geq 3 $ for
	all $ y\in Y_n\ssm\se{p} $, 
	\begin{equation*}
		d_0^{n+1}(p,y)= \infty = \bar{d}(p,y) \text{\ \,for all such $ y $}.	
	\end{equation*} 
	For $ x,y\neq p$, certainly $ d_0^{n+1}(x,y)=\bar{d}(x,y) $, since  $
	d_0^n\leq d_0^{n+1}\leq \bar{d} $ always holds and $ d_0^n(x,y)=\bar{d}(x,y)
	$ already.  Hence $ d_0^{n+1}=\bar{d} $ in $ Y_n $.
	The inequalities in \eqref{E:last} are a consequence of \pref{P:stabilizer}
	and the fact that $ d_0^n<\bar{d} $.

	Finally, since $ d_0^n(p,q)=3 $ in $ Y_n $ and $ Y_n $
	is isometrically embedded in $ Y_\infty $ for all $ n\in \N^+ $,
	\begin{equation*}
		\lim_{n\to \infty} d_0^n(p,q)\leq 3 <\bar{d}(p,q)=\infty\text{\ \,in
			$Y_{\infty}$}.
	\end{equation*}
	Thus $ \lim_nd_0^n<\bar{d} $. Since $
	(Y_\infty,d) $ is complete, \pref{P:stabilizer} guarantees that $
	d_0^n<d_0^{n+1} $ for all $ n\in \N $. 
\end{proof}
\begin{urem}
	By replacing $ Y $ by $ X $ throughout in \eref{E:iterated}, where $ (X,d) $
	is the space described in \eref{E:incompact}, one obtains a family $ (X_n,d)
	$ of spaces satisfying the same inequalities as in \eqref{E:last}, but with
	the additional property that each $ (X_n,d) $ is (locally) path-connected
	through rectifiable paths. To prove this one can use the argument given
	above, with small variations; however, the proof is more cumbersome since
	the metric $ d_0 $ on $ X_n $ does not have such a simple description as
	that on $ Y_n $.
\end{urem}


\subsection*{Acknowledgements}
The author thanks C.~Gorodski and F.~Gozzi for valuable discussions on this
and related topics. 
Financial support from \tsc{fapesp} is gratefully acknowledged.


\providecommand{\bysame}{\leavevmode\hbox to3em{\hrulefill}\thinspace}
\providecommand{\MR}{\relax\ifhmode\unskip\space\fi MR }
\providecommand{\MRhref}[2]{%
  \href{http://www.ams.org/mathscinet-getitem?mr=#1}{#2}
}
\providecommand{\href}[2]{#2}


\vspace{12pt}

\noindent{\small \tsc{Instituto de Matem\'atica e Estat\'istica,
	Universidade de S\~ao Paulo (IME-USP) \\ Rua~do Mat\~ao 1010, Cidade
	Universit\'aria -- S\~ao Paulo, SP
05508-090, Brazil}}\\
\noindent{\ttt{pedroz@ime.usp.br}}

\end{document}